\newtheorem{theorem}{Theorem}[section]
\newtheorem{proposition}[theorem]{Proposition}
\newtheorem{lemma}[theorem]{Lemma}
\newtheorem{definition}[theorem]{Definition}
\newtheorem{corollary}[theorem]{Corollary}
\newtheorem{problem}{Problem}[section]
\newcommand{\CC}{\mathbb{C}}
\newcommand{\ZZ}{\mathbb{Z}}
\newcommand{\RR}{\mathbb{R}}
\newcommand{\NN}{\mathbb{N}}
\newcommand{\cala}{\mathcal{A}}
\newcommand{\supp}{{\hbox{supp}\,}}
\newcommand{\Div}{{\mathrm{Div}}}
\begin{document}

\begin{frontmatter}[classification=text]
%% EDITOR: this will force the keywords to appear right after the Abstract.
%%   If the abstract is too long and would force the keywords off the
%%   front page, please comment out % [classification=text] above
%%   This way the keywords will be floated on the bottom of the first page
%%   even though the Abstract spills over to the next page.

%%% AUTHOR: Title goes here.  This line is optional.  You must use it
%%   if title has footnote attached or requires nontrivial typesetting,
%%   e.g., inclusion of linebreaks to force nice layout.
%\title{Short Proof of R\"odl's $n^{\log\log n}$ Bound\footnote{This is a footnote to the title}} %% please capitalize all significant words

%%% AUTHOR:
%%% List all authors. If you wish, place grant acknowledgements in \thanks.
%%% In brackets include a short tag for each author.
\author[gergo]{Gergely Kiss\thanks{Supported by supported by the Hungarian National Foundation for
		Scientific Research, Grants No. K146922, FK 142993 and by the J\'anos Bolyai Research Fellowship of the Hungarian Academy of Sciences.}}
\author[itay]{Itay Londner\thanks{Supported by the Israel Science Foundation (Grant 607/21).}}
\author[mate]{M\'at\'e Matolcsi\thanks{Supported by the Hungarian National Foundation for Scientific Research, Grants No. K132097, K146387.}}
\author[gabor]{G\'abor Somlai\thanks{Supported by the Hungarian National Foundation for Scientific Research, Grant No. 138596 and Starting Grant 150576.}}

%%% AUTHOR: Abstract goes here
\begin{abstract}
Coven and Meyerowitz \cite{CM} formulated two conditions which have since been conjectured to characterize all finite sets that tile the integers by translation. By periodicity, this conjecture is reduced to sets which tile a finite cyclic group $\ZZ_M$. In this paper we consider a natural relaxation of this problem, where we replace sets with nonnegative functions $f,g$, such  that $f(0)=g(0)=1$, $f\ast g=\mathbf{1}_{\ZZ_M}$ is a functional tiling, and $f, g$ satisfy certain further natural properties associated with tilings. We show that the Coven-Meyerowitz tiling conditions do not necessarily hold in such generality. Such examples of functional tilings carry the potential to lead to proper tiling counterexamples to the Coven-Meyerowitz conjecture in the future.
\end{abstract}
\end{frontmatter}

%%% AUTHOR: body of paper starts here

	\section{Introduction}
	\subsection{Tilings of the integers and the Coven-Meyerowitz conjecture}
	Let $A\subset \ZZ$ be a finite set. One says that $A$ \emph{tiles the integers by translations} if a disjoint union of translated copies of $A$ covers $\ZZ$. Alternatively, if there exists a set of translations $T\subset \ZZ$  such that any integer $n$ can be uniquely expressed as the sum $a+t=n$, with $a\in A$ and $t\in T$. It is well-known \cite{New} that in this case the set $T$ must be periodic, i.e. there exist $M\in \NN$ and a finite set $B\subset \ZZ$ such that  $T=B\oplus M\ZZ$. This implies that $|A||B|=M$ and $A\oplus B$ is a tiling of the cyclic group $\ZZ_M$, with addition$\mod M$.
	
	\medskip
	
	The prime factorization of $M$ will be denoted by $M=\prod_{i=1}^K p_i^{n_i}$.
	
	\medskip
	
	Let $f:\ZZ_M\rightarrow \CC$ be an arbitrary function. Identifying $\ZZ_M$ with $\{0,\ldots, M-1\}$, we define the \emph{mask polynomial} of $f$ as
	\begin{equation}\label{maskpoly}
		F(X)=\sum_{z\in\ZZ_M} f(z)X^z.
	\end{equation}
	
	The mask polynomial is closely
	related to the Fourier transform of $f$, defined as
	$$
	\hat{f}(\xi)=\sum_{z\in\ZZ_M} f(z)e^{-2\pi i z\xi/M} \ \ (\xi=0, 1, \dots, M-1).
	$$
	Namely, with the notation $\eta=e^{-2\pi i/M}$, we have $F(\eta^\xi)=\hat{f}(\xi)$.
	
	If $f$ is nonnegative, the total weight of $f$ will be denoted by

    \begin{equation}\label{fabs}
    |F|:=F(1)=\sum_{z\in\ZZ_M} f(z)=\hat{f}(0).
    \end{equation}

	Given $m|M$, we define the set
	$$
	R_m:=\{z\in\ZZ_M:(z,M)=m\}.
	$$
	Note, for future reference, that $R_M=\{0\}$. We refer to each of the sets $R_m$ as a {\it step class}.
	
	\begin{definition}\label{stepf}
		We say that $f$ is a {\em step function} if $f$ is constant on each step class $R_m$. We will denote the set of step functions by $\mathcal{A}(M)$.
	\end{definition}
	
	The set $\mathcal{A}(M)$ is a vector space with the additional property that the Fourier transform maps it to itself (see Lemma \ref{fl} below).
	
	\medskip
	
	Given a set $A\subset \ZZ_M$, its mask polynomial, defined in \eqref{maskpoly} with $f=\mathbf{1}_A$, will be denoted by $A(X)$. As $A(X)$ is a polynomial with integer coefficients, for any $m|M$ the Fourier transform $\hat{1}_A$ is either constant zero on the step class $R_m$, or it has no zeros in $R_m$ at all. As such, the support of $\hat{1}_A$ is always a union of step classes. Also, the total weight of $\mathbf{1}_A$ coincides with the cardinality of the set $A$, which will be denoted $|A|$.
	
	Using the language of mask polynomials one may reformulate the tiling property in $\ZZ_M$ in terms of cyclotomic polynomials. Due to translation invariance we may assume  $0\in A\cap B$. The tiling condition $A\oplus B=\ZZ_M$ is equivalent to
	\begin{equation}\label{polytile}
		A(X)B(X)=1+X+\ldots +X^{M-1}\mod(X^M-1).
	\end{equation}
	Let $\Phi_m$ be the cyclotomic polynomial of order $m$, that is the unique, monic, irreducible polynomial whose roots are the primitive $m$-th roots of unity. The factorization \eqref{polytile} is in turn equivalent to
	\begin{equation}\label{cycfact}
		|A||B|=M\text{ and for all } m|M,\, m\neq 1\,  \Phi_m(X) \text{ divides at least one of } A(X), B(X),
	\end{equation}
	or, using the language of Fourier transforms, it is further equivalent to
	\begin{equation}\label{fou}
		|A||B|=M\text{ and for all } 0\ne \xi\in \ZZ_M, \
		\hat{\mathbf{1}}_A(\xi)=0 \ \text{or} \  \hat{\mathbf{1}}_B(\xi)=0.
	\end{equation}
	
	Note that for any $m|M$ we have

    \begin{equation}\label{cycfou}
    \Phi_m(X)|A(X) \ {\text{if and only if}} \ A(\eta^\xi)=0 \ {\text{for all}} \  \xi\in R_{M/m},
    \end{equation}
which can also be stated as
\begin{equation}\label{cycfou2}
\Phi_m(X)|A(X) \ {\text{if and only if}} \
\hat{\mathbf{1}}_A(\xi)=0  \ {\text{for all}} \ \xi\in R_{M/m}.
 \end{equation}

	\medskip
	
	Coven and Meyerowitz proved the following.
	\begin{theorem}[\cite{CM}]\label{CMthm}
		Let $A$ be a finite set of integers and
		$$
		S_A:=\{p^\alpha\,:\, p^\alpha \text{ is a prime power and } \Phi_{p^\alpha}(X)\mid A(X)\}.
		$$
		Consider the following two conditions

		{\it (T1) $A(1)=\prod_{s\in S_A}\Phi_s(1)$,}
		
		\smallskip
		{\it (T2) if $s_1,\dots,s_k\in S_A$ are powers of different
			primes, then $\Phi_{s_1\dots s_k}(X)\mid A(X)$.}
		\smallskip
		
		\noindent
		Then
		\begin{itemize}
			\item  If $A$ satisfies both (T1) and (T2) then $A$ tiles $\ZZ$.
			\item  If $A$ tiles $\ZZ$  then it must satisfy (T1).
			\item If $A$ tiles $\ZZ$, and $|A|$ has at most two prime factors, then it satisfies (T2).
		\end{itemize}
	\end{theorem}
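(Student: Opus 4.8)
The plan is to establish the three assertions separately, in increasing order of difficulty. \emph{Necessity of (T1):} suppose $A\oplus B=\ZZ_M$ with $|A||B|=M$ and, after a translation, $0\in A\cap B$; by \eqref{cycfact}, every $\Phi_d$ with $1\ne d\mid M$ divides $A(X)$ or $B(X)$. Writing $M=\prod_{i=1}^{K}p_i^{n_i}$, set $s_A(p_i)=\#\{\alpha\ge 1:\ p_i^\alpha\in S_A\}$ and $s_B(p_i)=\#\{\alpha\ge 1:\ p_i^\alpha\in S_B\}$; since $\Phi_{p^\alpha}(1)=p$, (T1) is precisely the equality $\prod_i p_i^{s_A(p_i)}=|A|$. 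The cyclotomics $\{\Phi_{p_i^\alpha}:\ p_i^\alpha\in S_A\}$ are pairwise coprime and each divides $A(X)$, hence so does their product, and evaluating at $X=1$ gives $\prod_i p_i^{s_A(p_i)}\mid A(1)=|A|$; likewise $\prod_i p_i^{s_B(p_i)}\mid|B|$. On the other hand, for each $i$ and each $1\le\alpha\le n_i$ we have $p_i^\alpha\in S_A\cup S_B$, so $s_A(p_i)+s_B(p_i)\ge n_i$; multiplying the two divisibilities above yields $\prod_i p_i^{s_A(p_i)+s_B(p_i)}\mid|A||B|=M=\prod_i p_i^{n_i}$, whence $s_A(p_i)+s_B(p_i)=n_i$ for all $i$. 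Thus $\prod_i p_i^{s_A(p_i)}$ and $\prod_i p_i^{s_B(p_i)}$ are divisors of $|A|$ and of $|B|$ with product $|A||B|$, forcing equality in both; this is (T1).

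\emph{Sufficiency of (T1) and (T2):} the plan is to produce a tiling complement by hand (the case $S_A=\emptyset$, forcing $|A|=1$, being trivial). Let $p_1,\dots,p_s$ ($s\ge 1$) be the primes occurring in $S_A$, put $A_i=\{\alpha\ge 1:\ p_i^\alpha\in S_A\}$ and $k_i=|A_i|$, so $|A|=\prod_i p_i^{k_i}$ by (T1). Choose $n_i\ge\max A_i$ large enough that $N:=\prod_i p_i^{n_i}$ exceeds $\max A$, and set $\mathcal{M}_i:=\{1,\dots,n_i\}\setminus A_i$. For each $i$ form the ``sparse base-$p_i$'' block
\[
B_i:=\Bigl\{\,\sum_{c\in\mathcal{M}_i}t_c\,p_i^{c-1}\ :\ 0\le t_c\le p_i-1\,\Bigr\}\subseteq\ZZ_{p_i^{n_i}} .
\]
Since distinct elements of $\mathcal{M}_i$ differ by at least one, these sums are pairwise distinct, so $|B_i|=p_i^{|\mathcal{M}_i|}$; and its mask polynomial equals $\prod_{c\in\mathcal{M}_i}\Phi_{p_i^{c}}(X)$, each factor being $1+Y+\dots+Y^{p_i-1}$ specialised at $Y=X^{p_i^{c-1}}$. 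Let $B:=\bigoplus_i B_i\subseteq\ZZ_N$ via the Chinese Remainder Theorem; then $|B|=\prod_i p_i^{n_i-k_i}=N/|A|$. The crucial point is that $\hat{\mathbf{1}}_B$ factors as a product, over the CRT coordinates, of the $\hat{\mathbf{1}}_{B_i}$, from which one reads off that for a divisor $d=\prod_i p_i^{c_i}\mid N$, $d\ne 1$, the transform $\hat{\mathbf{1}}_B$ vanishes identically on the step class $R_{N/d}$ as soon as some index $i$ has $c_i\ge 1$ with $p_i^{c_i}\notin S_A$; equivalently, $\Phi_d\mid B(X)$ for every such $d$. Every remaining divisor $d\mid N$, $d\ne 1$, has all of its prime-power components in $S_A$, hence $\Phi_d\mid A(X)$ --- by the definition of $S_A$ if $d$ is a prime power, and by (T2) if $d$ has at least two prime factors. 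Thus for every $d\mid N$ with $d\ne 1$, $\Phi_d$ divides $A(X)$ or $B(X)$, and since $|A||B|=N$ this gives $A(X)B(X)\equiv 1+X+\dots+X^{N-1}\pmod{X^N-1}$, i.e.\ $A\oplus B=\ZZ_N$, so $A$ tiles $\ZZ$.

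\emph{Necessity of (T2) when $|A|$ has at most two prime factors:} we may assume $|A|=p^aq^b$ for distinct primes $p,q$, since otherwise $S_A$ contains powers of at most one prime and (T2) is vacuous. Let $p^\alpha,q^\beta\in S_A$; we must show $\Phi_{p^\alpha q^\beta}\mid A(X)$, and if this fails then $\Phi_{p^\alpha q^\beta}\mid B(X)$ by \eqref{cycfact}, which we shall contradict. The plan is an induction on $M$, using as a dictionary the equivalence of $\Phi_{p^n}(X)\mid A(X)$ with the equidistribution statement ``$|A\cap(r+p^n\ZZ)|$ depends only on $r\bmod p^{n-1}$'' (and the analogous statements for the $q$-powers and for the mixed divisor of $B$). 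With this dictionary one slices the tiling $A\oplus B=\ZZ_M$ along the top power of $p$ (or of $q$) dividing $M$ to obtain a genuine tiling $A'\oplus B'=\ZZ_{M'}$ with $M'<M$ into which the relevant $p^\alpha$-, $q^\beta$- and $p^\alpha q^\beta$-divisibilities descend correctly, iterating until reaching a case small enough to verify the claim directly, at which point the three prescribed relations cannot hold simultaneously --- the desired contradiction.

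I expect this last step to be the main obstacle. One must keep $A'$ and $B'$ honest subsets (not multisets) throughout the reduction while faithfully tracking which mixed cyclotomics $\Phi_{p^i q^j}$ are forced onto $A'$ versus $B'$, and the hypothesis of at most two primes is exactly what makes the slicing terminate: with three or more primes, $A$ and $B$ can exchange cyclotomic factors in a ``third direction'', and peeling off a prime is no longer clean --- one meets configurations such as $\Phi_{p^\alpha},\Phi_{q^\beta},\Phi_{r^\gamma}\mid A$ together with $\Phi_{p^\alpha q^\beta}\mid B$ but $\Phi_{p^\alpha r^\gamma}\mid A$. It is precisely this breakdown --- and whether (T2) can genuinely fail in greater generality --- that the present paper investigates through functional relaxations.
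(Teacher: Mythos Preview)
The paper does not supply a proof of this theorem: it is quoted verbatim from Coven--Meyerowitz \cite{CM} as background, with no argument given. So there is no in-paper proof to compare against; I can only assess your proposal on its own merits and against the original \cite{CM}.

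Your treatments of the first two bullets are correct and essentially reproduce the Coven--Meyerowitz arguments. For (T1) necessity, the counting via $\prod_i p_i^{s_A(p_i)}\mid |A|$ and the pigeonhole $s_A(p_i)+s_B(p_i)\ge n_i$ is exactly the standard proof. For sufficiency, your CRT construction of $B$ is a clean repackaging of the Coven--Meyerowitz complement; the Fourier factorisation $\hat{\mathbf{1}}_B(\xi)=\prod_i\hat{\mathbf{1}}_{B_i}(\xi_i)$ is the right way to see which $\Phi_d$ divide $B(X)$, and your case split on whether every prime-power component of $d$ lies in $S_A$ correctly invokes (T2) for the residual divisors.

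The third bullet, however, is not a proof but a plan --- and you say as much. The outline (``slice along the top power of $p$, descend to a smaller tiling, induct'') is in the right spirit, but the actual Coven--Meyerowitz argument requires a nontrivial structural input at each step: one must show that in any tiling $A\oplus B=\ZZ_M$ with $p\mid M$, one of the two tiles can be replaced by a set contained in $p\ZZ_M$ (a consequence of a decomposition lemma going back to de~Bruijn and Sands), and then carefully verify that the three divisibility statements $\Phi_{p^\alpha}\mid A'$, $\Phi_{q^\beta}\mid A'$, $\Phi_{p^\alpha q^\beta}\nmid A'$ survive the reduction in the appropriate form. Your ``equidistribution dictionary'' is useful for the prime-power cyclotomics but does not by itself control the mixed factor $\Phi_{p^\alpha q^\beta}$ through the slicing, and you have not specified the base case or why the induction terminates with a contradiction. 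As written, this part has a genuine gap: the reduction mechanism is asserted rather than constructed, and the interaction between the slicing and the mixed cyclotomic is precisely where the work lies.
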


	The question whether conditions (T1) and (T2) characterize all sets which tile the integers, without any restriction on the prime factorization of the cardinality of a tile, has been referred to in the literature as the \emph{Coven-Meyerowitz conjecture} (the first instance it was referred to as an explicit conjecture is \cite{KL}). This conjecture is considered to be the main open question in the topic of integer tilings. Until recent years, the conjecture has been verified only in some special cases \cite{dutkay-kraus,M, Tao-blog, shi, KL}. More recently there has been noticeable progress. In \cite{LaLo2, LaLo4} it has been confirmed for all tilings of period $M=(p_1p_2p_3)^2$, where $p_1, p_2, p_3$ are distinct primes. Subsequently, in \cite{LaLo3} it has been confirmed for tilings of period $M$ having arbitrarily many prime factors with some specified restrictions on their sizes and powers.
	\medskip
	
	Conditions (T1) and (T2) can also be naturally formulated for arbitrary nonnegative functions $f: \ZZ_M\to \RR$. For the mask polynomial $F(X)$ of $f$, for any $m|M$ we say that $\Phi_m(X)|F(X)$ if $F(\eta^\xi)=0$ for all $\xi\in R_{M/m}$. Then, letting $S_F$ denote the set of prime-powers $p^\alpha|M$ such that $\Phi_{p^\alpha}(X)|F(X)$, we say that $F$ satisfies condition (T1) if $F(1)=\prod_{s\in S_F} \Phi_s(1)$, and $F$ satisfies condition (T2) if $s_1, \dots , s_k\in S_F$ being powers of different primes implies that $\Phi_{s_1\dots s_k}(X)|F(X)$.
	
	\medskip
	
	Going back to condition \eqref{cycfact}, given a collection $H=\{m_1, m_2, \dots, m_s\}$ of divisors of $M$,
	the main difficulty is to decide whether there exists a set $A$ such that the cyclotomic divisors of $A(X)$ are exactly $\Phi_{m_j}$ ($1\le j\le s$), and $A$ satisfies condition (T1). It is not hard to see that this question is equivalent to solving an integer programming (IP) problem  (we will discuss this fact in Section \ref{sec2} below). It is therefore natural to consider a linear programming relaxation of the tiling criteria, to which we turn next.
	
	\subsection{Weak-tiling and pd-tiling}
	
	The notion of {\it weak tiling} was  introduced in \cite{LM} as a key concept in proving Fuglede's conjecture for convex bodies. In the context of finite cyclic groups, it is a relaxation of proper tilings in the following way: a set $A$ is said to tile $\ZZ_M$ weakly if there exists a nonnegative function $f: \ZZ_M\to \RR_+$ such that $f(0)=1$ and $\mathbf{1}_A\ast f=\mathbf{1}_{\ZZ_M}$. Clearly, when $A\oplus B=\ZZ_M$ is a tiling, one can choose $f=\mathbf{1}_B$ (where we assumed, without loss of generality, that $0\in B$).
	
	\medskip
	
	Weak tiling is a very natural concept. However, in order to be able to invoke tools from linear programming, the following slightly modified definition was introduced in \cite{KMMS}.
	
	\begin{definition}\label{pdtile}
		A set $A\subset \ZZ_M$ pd-tiles $\ZZ_M$ weakly, if there exists a function $f:\ZZ_M\rightarrow \RR$ satisfying
		
		(i) $f(0)=1$,
		
		(ii) $f\geq 0$,
		
		(iii) $\hat{f}\geq 0$
		
		\smallskip
		\noindent
		and such that
		\begin{equation}\label{weaktile}
			\mathbf{1}_A\ast f=\mathbf{1}_{\ZZ_M}.
		\end{equation}
		To abbreviate the terminology, we will just say that $A$ pd-tiles $\ZZ_M$, dropping the
		term ”weakly”. Any function $f$ with the properties above will be called a pd-tiling complement of $A$.
	\end{definition}
	
	In this terminology, "pd" stands for positive definite. As observed in \cite{KMMS} if $A\oplus B=\ZZ_M$, then $A$ pd-tiles $\ZZ_M$ with the function $\frac{1}{|B|}\mathbf{1}_B\ast \mathbf{1}_{-B}$. In connection with Fuglede's conjecture we remark that if $A\subset \ZZ_M$ is spectral then $A$ also pd-tiles $\ZZ_M$ (cf. \cite{KMMS}).
		
	\medskip
	
	We also note that for any pd-tiling, equation \eqref{weaktile} is equivalent to the analogue of \eqref{fou}:
	
	\begin{equation}\label{pdt}
		|A||F|=M, \ \text{and for all} \ 0\ne \xi\in \ZZ_M \ \text{we have} \ \hat{\mathbf{1}}_A(\xi)=0 \ \text{or} \ \hat{f}(\xi)=0,
	\end{equation}
where $|F|$ denotes the total weight of $f$, as defined in \eqref{fabs}.

	Furthermore, due to the fact that the support of $\hat{\mathbf{1}}_A$ is a union of step classes $R_m$, the function $\hat{f}$ must vanish on those classes, i.e. $F(X)$ must be divisible by the corresponding cyclotomic polynomials $\Phi_{M/m}(X)$.
	
	\medskip
	
	Formally speaking, the concept of pd-tiling is a relaxation of proper tiling. It is therefore natural to ask the following questions.
	
	\medskip
	
	\begin{problem}\label{prob1}
		Is it true that whenever $A$ pd-tiles $\ZZ_M$, then $A$ also tiles $\ZZ_M$ properly?
	\end{problem}

	\begin{problem}\label{prob2}
		Is the Coven-Meyerowitz conjecture true for all pd-tilings, i.e. is it true that for any set $A\subset \ZZ_M$ that pd-tiles $\ZZ_M$, and any a pd-tiling complement $f$ of $A$, both $A$ and $F$ satisfy conditions (T1) and (T2)?
	\end{problem}

	\medskip
	
	While these questions remain open in full generality, in this paper we give a partial answer to Problem \ref{prob1} (see Proposition \ref{primep} below), and solve a functional variant of Problem \ref{prob2}.

	\subsection{Functional pd-tiling}
	
	As in \cite{KMMS}, it is natural to go one step further with the relaxation of the tiling conditions.
	
	\begin{definition}\label{fweakdef}
		Let $f,g:\ZZ_M\rightarrow\RR$ be functions satisfying conditions (i)-(iii) of Definition \ref{pdtile}. We say that the pair $(f,g)$ is a functional pd-tiling of $\ZZ_M$ if
		\begin{equation}\label{fweaktile}
			f\ast g=\mathbf{1}_{\ZZ_M}.
		\end{equation}
	\end{definition}
	
	Clearly, if $A\oplus B=\ZZ_M$ is a tiling, then the functions $h^{(1)}_{A}=\frac{1}{|A|}\mathbf{1}_A\ast \mathbf{1}_{-A}$ and $h^{(1)}_B=\frac{1}{|B|}\mathbf{1}_B\ast \mathbf{1}_{-B}$ form a functional pd-tiling of $\ZZ_M$, and the cyclotomic divisors of $H^{(1)}_A(X)=\sum_{z\in \ZZ_M} h^{(1)}_A(z)X^z$ and $H^{(1)}_B(X)=\sum_{z\in \ZZ_M} h^{(1)}_B(z)X^z$ are the same as those of $A(X)$ and $B(X)$, respectively. As such, condition (T1) holds for both $H^{(1)}_A$ and $H^{(1)}_B$.
	
	\medskip
	
	In the context of the functional pd-tiling $(h^{(1)}_A, h^{(1)}_B)$ above, it is natural to carry out an {\it averaging} procedure which, as we shall see, leads to {\it step functions} $h_A$, $h_B$ while not altering the cyclotomic divisors of the corresponding mask polynomials. This enables us to study the Coven-Meyerowitz conditions in the class of step functions.

	\medskip
	
	Given the functions $h^{(1)}_A, h^{(1)}_B$ above, and an element $r\in \ZZ_M$ such that $(r, M)=1$, we define $h^{(r)}_A(z)=h^{(1)}_A(rz)$ and
	\begin{equation}\label{avg}
		h_A(z)=\frac{1}{\phi(M)}\sum_{r\in R_1} h^{(r)}_A(z),
	\end{equation}
	and similarly for $h_B$. Here $\phi$ denotes Euler's totient function.
	
	\medskip
	
	For any $z, y$ in the same step class $R_d$ there exists an $r\in R_1$, such that $rz\equiv y$  (mod $M$), therefore $h_A(z)=h_A(y)$, and hence
    the functions $h_A$ and $h_B$ are constant over step classes, i.e. they are step functions, $h_A, h_B \in\mathcal{A}(M)$. It is also well known that the Fourier transform of a step function is also a step function, and we cite the formula for the Fourier transform of the indicator function of a single step class $R_m$ as follows:

\begin{lemma}[\cite{LaLo1}, Lemma 4.8]\label{fl} Let $m|M$, then
		$$
\widehat{\mathbf{1}_{R_m}}(\xi)=\sum_{d|(M/m,\xi)}\mu\left (\frac{M}{md}\right )d,
		$$
		where $\mu$ is the M\"{o}bius function.
		\end{lemma}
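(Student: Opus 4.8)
The plan is to give a direct, self-contained computation; the statement is classical — the sum in question is essentially a Ramanujan sum — and could simply be cited from \cite{LaLo1}, but the short argument is instructive and sets up notation used later. First I would parametrize the step class explicitly: every $z\in R_m$ has the form $z=mw$ with $0\le w<M/m$, and the gcd identity $(mw,M)=m\,(w,M/m)$ shows that $R_m=\{\,mw:\ 0\le w<M/m,\ (w,M/m)=1\,\}$, so that $|R_m|=\phi(M/m)$. Cancelling the common factor $m$ from the exponent gives
\[
\widehat{\mathbf{1}_{R_m}}(\xi)=\sum_{\substack{0\le w<M/m\\ (w,M/m)=1}} e^{-2\pi i w\xi/(M/m)}.
\]

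Rather than evaluating this restricted sum head-on, I would pass through an inclusion–exclusion over subgroups. For each divisor $l$ of $M/m$, let $G_{ml}\le\ZZ_M$ denote the cyclic subgroup of multiples of $ml$, which has order $M/(ml)$. Sorting the elements of $G_{mj}=\{z\in\ZZ_M:\ mj\mid z\}$ by the value of $(z,M)$ yields the disjoint decompositions $G_{mj}=\bigsqcup_{j\mid l\mid M/m} R_{ml}$ for every $j\mid M/m$, hence $\mathbf{1}_{G_{mj}}=\sum_{j\mid l\mid M/m}\mathbf{1}_{R_{ml}}$. Möbius inversion on the divisor lattice of $M/m$ (inversion over multiples), applied with $j=1$, then gives
\[
\mathbf{1}_{R_m}=\sum_{l\mid M/m}\mu(l)\,\mathbf{1}_{G_{ml}}.
\]
Taking Fourier transforms reduces the problem to the transform of the indicator of a subgroup, which is a plain geometric sum: $\widehat{\mathbf{1}_{G_{ml}}}(\xi)$ equals $M/(ml)$ when $M/(ml)\mid \xi$ and $0$ otherwise.

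Finally I would substitute this into the previous identity and change variables $d=M/(ml)$, so that as $l$ runs over the divisors of $M/m$ so does $d$, with $\mu(l)=\mu\!\left(\tfrac{M}{md}\right)$ and $M/(ml)=d$. This turns the sum into $\sum_{d\mid M/m,\ d\mid\xi}\mu\!\left(\tfrac{M}{md}\right)d=\sum_{d\mid(M/m,\xi)}\mu\!\left(\tfrac{M}{md}\right)d$, which is exactly the claimed formula. There is no genuine obstacle in this proof; the only point that requires care is the bookkeeping — correctly identifying the Möbius inversion over the interval of divisors between $m$ and $M$ with the final index set, and tracking the change of variable $d=M/(ml)$ so that all divisibility constraints line up. A quick sanity check against the two extreme cases $\xi=0$ (where the formula should return $\phi(M/m)$) and $(\xi,M/m)=1$ (where it returns $\mu(M/m)$) confirms the normalization.
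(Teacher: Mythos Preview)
Your proof is correct and complete; this is the standard Ramanujan-sum computation via M\"obius inversion over the subgroup lattice, and every step checks out. Note, however, that the paper does not supply its own proof of this lemma --- it is quoted verbatim from \cite{LaLo1} as an input --- so there is no in-paper argument to compare against; your self-contained derivation is a welcome addition rather than a reworking of anything the authors wrote.
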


	The functions $h_A, h_B$ and their Fourier transforms $\hat{h}_A, \hat{h}_B$ are all nonnegative, and their positive values are all bounded away from zero. Indeed, $h_A=\frac{1}{|A|\phi(M)}\sum_{r\in R_1}1_{rA}\ast 1_{-rA}$, and the sum is a nonnegative, integer-valued function, therefore the non-zero values of $h_A$ satisfy

	\begin{equation}\label{eps}
h_A(z)\ge \frac{1}{|A|\phi(M)}\ge \frac{1}{M \phi(M)}=:\delta_M, \ \ \ \ \ \ (\text{if} \ h_A(z)\ne 0),
    \end{equation}
and the same argument holds for $h_B$. Interestingly, the same lower bound also holds for $\hat{h}_A$ and $\hat{h}_B$ for the following reason:
	
	\begin{equation}\label{trace}
		\widehat{h}_A(\xi)=\frac{1}{|A|\phi(M)}\left(\sum_{r\in R_1}1_{rA}\ast 1_{-rA}\right)^{\bigwedge}(\xi)\ge 0,
	\end{equation}
where the expression is nonnegative because the Fourier transform of each term inside the bracket is $|\hat{1}_{rA}|^2(\xi)\ge 0$. Also, the sum in the bracket is integer-valued, and hence its Fourier transform is also integer valued by Lemma \ref{fl} above. Therefore, we conclude that all non-zero values of $\widehat{h}_A$
satisfy
\begin{equation}\label{trh}
\widehat{h}_A(\xi)\ge \frac{1}{|A|\phi(M)}\ge \frac{1}{M\phi(M)},  \ \ \ \ \ \
( \text{if} \   \widehat{h}_A(\xi)\ne 0)
\end{equation}
 and the same argument holds for $\widehat{h}_B$.

	\medskip

The advantage of the averaging procedure above is that the class of step functions is easier to study with tools from linear programming (LP). As such, we will be able to formulate some non-trivial LP conditions for any functional pd-tilings and also for proper tilings of $\ZZ_M$ (see Section \ref{sec2}). The downside of averaging, however, is that when any nonnegative step function $f$ is given, it is a often a nontrivial task to determine whether there exists a set $A\subset \ZZ_M$ such that $f=h_A$.
	
	\medskip

	Consider now the mask polynomials $H_A$ and $H_B$ corresponding to $h_A, h_B$, as defined in equation  \eqref{maskpoly}. We claim that
	the averaging process preserves  cyclotomic divisibility. First, due to \eqref{cycfou}, for every $m|M$ and   every $(r,M)=1$  we have that
	$\Phi_m(X)|H^{(r)}_A(X)$ if and only if $\Phi_m(X)|H^{(1)}_A(X)$. As the values $H^{(r)}_A(\eta^\xi)=\hat{h}_A(\xi)$ are all nonnegative by \eqref{trace}, we conclude that $H_A(\eta^\xi)=0$ if and only if $H^{(r)}_A(\eta^\xi)=0$ for all $r$. That is, $\Phi_m(X)|H_A(X)$ if and only if $\Phi_m(X)|H^{(r)}_A(X)$ for all $r$, which happens if and only if $\Phi_m(X)|H^{(1)}_A(X)$. Therefore, the cyclotomic divisors of $H_A(X)$ and $H_B(X)$ are the same as those of $A(X)$ and $B(X)$.  In particular, condition (T1) of Coven-Meyerowitz is satisfied for both $H_A$ and $H_B$.
	
	\medskip
	
	It is therefore natural to ask the following question.
	
	\begin{problem}\label{prob3}
		Let $(f, g)$ be a functional pd-tiling of $\ZZ_M$ as in Definition \ref{fweakdef}. Assume that both $f$ and $g$ are step functions, and the corresponding mask polynomials $F$ and $G$ both satisfy condition (T1). Do $F$ and $G$ necessarily satisfy condition (T2)?
	\end{problem}
	
	\medskip
	
	A positive answer to this question would mean that the Coven-Meyerowitz conjecture holds in this much more general setting. However, the main result of this paper is to answer this question in the negative.
	
	\begin{theorem}\label{main}
		Let $p,q$ be prime numbers such that $p<q<p^2$, and set $M=p^4q^2$. Then there exists a functional pd-tiling $(f,g)$ of $\ZZ_M$ such that both $f$ and $g$ are step functions (as in Definition \ref{stepf}) and the corresponding polynomials $F, G$ satisfy condition (T1), but neither of them satisfies condition (T2).
	\end{theorem}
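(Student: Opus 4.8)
We outline the strategy; the author's proof follows below.

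The plan is to pass to the Fourier side, reduce the statement to a realizability question for step functions with a prescribed vanishing pattern on step classes, and then exhibit $f$ and $g$ explicitly. Writing $u=\widehat f$, $w=\widehat g$ and using $\widehat{\mathbf{1}_{\ZZ_M}}(\xi)=M$ for $\xi=0$ and $0$ otherwise, equation \eqref{fweaktile} becomes $u(0)w(0)=M$ together with $u(\xi)w(\xi)=0$ for all $\xi\ne 0$; since $u,w$ are step functions the second condition says that for each $d\mid M$, $d\ne 1$, at least one of $\Phi_d\mid F$, $\Phi_d\mid G$ holds. As $M=p^4q^2$ has only the two prime divisors $p,q$, this covering condition for prime-power $d$ together with (T1) for $F,G$ and $|F|\,|G|=M$ forces the six factors $\Phi_p,\Phi_{p^2},\Phi_{p^3},\Phi_{p^4},\Phi_q,\Phi_{q^2}$ to be split between $F$ and $G$, none dividing both. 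I take $S_F=\{p,p^2,q\}$ and $S_G=\{p^3,p^4,q^2\}$, so that $|F|=\prod_{s\in S_F}\Phi_s(1)=p^2q=\prod_{s\in S_G}\Phi_s(1)=|G|$; then (T1) holds for both by construction and $|F|\,|G|=p^4q^2=M$.

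It now suffices to build a functional pd-tiling $(f,g)$ with $f,g$ step functions whose mask polynomials have these $S_F,S_G$ and satisfy $\Phi_{pq}\nmid F$ and $\Phi_{p^3q^2}\nmid G$: since $p,q\in S_F$ are powers of different primes, $\Phi_{pq}\nmid F$ kills (T2) for $F$, and likewise $p^3,q^2\in S_G$ with $\Phi_{p^3q^2}\nmid G$ kills (T2) for $G$. Concretely I would fix a full assignment of the divisors $d\mid M$, $d\ne 1$, to the $F$-side and/or the $G$-side — for instance letting $\Phi_d\mid F$ exactly for $d\in\{p,p^2,q,p^3q^2\}$ and $\Phi_d\mid G$ for all other $d\ne 1$ (one checks this is consistent with the covering condition, with $S_F,S_G$ as above, and with (T1), and makes (T2) fail for both) — and then prescribe $u,w$ on the step classes $R_m$ ($m\mid M$): $u(0)=w(0)=p^2q$; $u\equiv 0$ on $R_{M/d}$ precisely for $d$ on the $F$-side and positive on the remaining step classes; $w\equiv 0$ on $R_{M/d}$ precisely for $d$ on the $G$-side and positive elsewhere. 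With the convention $f=\tfrac1M\widehat u(-\,\cdot\,)$, $g=\tfrac1M\widehat w(-\,\cdot\,)$, the values of $f$ and $g$ on each step class are then explicit via Lemma \ref{fl} as $\ZZ$-linear combinations of the chosen parameters, with integer coefficients built from Möbius sums over divisors of $p^4q^2$. Conditions (ii) and (iii) of Definition \ref{pdtile} for the pair $(f,g)$ (that is, $f,g\ge 0$ and $\widehat f,\widehat g\ge 0$) become: the prescribed values are $\ge 0$ (automatic, they are positive), and all resulting step-class values of $f$ and $g$ are $\ge 0$.

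Granting a choice of parameters meeting these requirements, the rest is a short check. $\widehat f=u\ge 0$ and $\widehat g=w\ge 0$ by the prescription; $f\ast g=\mathbf{1}_{\ZZ_M}$ because $u$ and $w$ have complementary supports off the origin (the $F$-side and $G$-side assignments cover all $d\ne 1$) and $u(0)w(0)=(p^2q)^2=M$; and $f,g$ are step functions by construction. The normalizations $f(0)=g(0)=1$ amount to the linear identities $\sum_{m\mid M}\phi(M/m)\,u(R_m)=M$ and its analogue for $w$, arranged by a final rescaling of the free positive parameters (there is ample freedom, since only a few step classes are constrained to vanish). Condition (T1) holds by the choice of $S_F,S_G$, which are exactly the prime powers on the respective sides because, by design, $u$ does not vanish on the step classes attached to $\Phi_{p^3},\Phi_{p^4},\Phi_{q^2}$ and $w$ does not vanish on those attached to $\Phi_p,\Phi_{p^2},\Phi_q$. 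Condition (T2) fails for both $F$ and $G$ by the choices $\Phi_{pq}\nmid F$, $\Phi_{p^3q^2}\nmid G$.

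The genuinely delicate step — and the origin of the hypothesis $p<q<p^2$ — is to show that the free positive parameters can in fact be chosen so that every step-class value of $f$ and of $g$ is nonnegative (equivalently, that the finite linear system packaging all of the above is feasible). Expanding through Lemma \ref{fl}, these step-class values are explicit integer combinations of $1,p,q,p^2,pq,p^3,p^2q,\dots$, and requiring them to be nonnegative produces a short list of inequalities among $p$ and $q$; I expect these to reduce to $p<q$ together with $q<p^2$ — for example a step-class value appearing as a positive multiple of $q-p$, and another as a positive multiple of $p^2-q$ — so that the construction works exactly in the stated range. Carrying out this feasibility analysis, and fixing a distribution of the remaining mixed cyclotomic factors for which it goes through, is the heart of the proof; everything else is bookkeeping with cyclotomic divisors and Fourier supports.
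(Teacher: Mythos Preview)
Your outline correctly identifies the Fourier-side reformulation and the combinatorial setup, but what you call ``the genuinely delicate step'' --- the feasibility of the nonnegativity system --- is the entire content of the theorem, and you have not carried it out. Saying you ``expect'' the inequalities to reduce to $p<q<p^2$ is not a proof; with your particular assignment ($\Phi_d\mid F$ exactly for $d\in\{p,p^2,q,p^3q^2\}$), it is not at all clear the system is feasible: $G$ then carries ten cyclotomic divisors, so $w=\widehat g$ is supported on only five step classes, leaving four free positive parameters against fifteen nonnegativity constraints on $g$. The phrase ``there is ample freedom'' is unjustified --- the Delsarte LP constraints of Section~\ref{sec2} show precisely how restrictive the simultaneous requirements $f\ge0$, $\widehat f\ge0$, $\supp\widehat f$ prescribed, $|F|=p^2q$ really are. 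Your proposal thus reduces the theorem to a feasibility claim of essentially the same strength as the theorem itself.

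The paper's proof sidesteps this difficulty by a device absent from your outline: it builds $f$ and $g$ as \emph{eigenfunctions} of the Fourier transform on $\ZZ_M$ with eigenvalue $\sqrt{M}=p^2q$. This collapses the two conditions $f\ge0$ and $\widehat f\ge0$ into a single one, and makes the tiling equation $\widehat f\cdot\widehat g=M\delta_0$ equivalent to the statement that the supports of $f$ and $g$ themselves meet only at $0$. The paper then writes $f$ and $g$ down explicitly as tables of step-class values; nonnegativity is verified by inspection, and the hypothesis $p<q<p^2$ enters through specific table entries such as $(q-p)$ and $(p^2-q)$. Note also that the paper's prime-power split differs from yours: it takes $S_F=\{p,p^4,q\}$, $S_G=\{p^2,p^3,q^2\}$, with (T2) failing via $\Phi_{pq}\nmid F$ and $\Phi_{p^2q^2}\nmid G$.
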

	
	Theorem \ref{CMthm} implies that the functional pd-tilings of Theorem \ref{main} cannot originate from proper tilings of $\ZZ_M$, i.e. there do not exist sets $A, B\subset \ZZ_M$ such that $A\oplus B=\ZZ_M$, and $f=h_A$, $g=h_B$.  However, in principle, it could be possible that one of the functions originates from a set, i.e. there exists a set $A\subset \ZZ_M$ such that $f=h_A$, but no corresponding set $B$ exists for which $g=h_B$ (or vice versa, $g$ originates from a set, but $f$ does not).
	
	\medskip
	
	A natural follow-up question is whether the examples $f\ast g=\mathbf{1}_{\ZZ_M}$ can be used in the future to construct proper counterexamples $A\oplus B=\ZZ_{M_1}$ to the Coven-Meyerowitz conjecture for some other value $M_1$. For example, one could hope to combine the functional pd-tilings of Theorem \ref{main} for $M=p^4 q^2$ and $M'=r^4s^2$, and try to construct a proper tiling counterexample in the group $\ZZ_{M_1}=\ZZ_{M M'}$.
	
	\medskip
	
	Note also, that by extending $f$ and $g$ periodically in an appropriate way,
	one may obtain such examples of functional pd-tilings for any $M'$ divisible by $M$. We also remark here that the examples in Theorem \ref{main} are far from being unique, e.g. other examples exist for $M=(pqr)^2$ (see Section \ref{app} for a detailed account of the case where $p=3, q=5, r=7$). On the other hand, it is not hard to prove that there are no examples in the spirit of Theorem \ref{main} for $M=pq$, and it also appears (by computer experiment) that there are no such examples for $M=p^\alpha q^\beta$ where $\beta=1,$ $\alpha\le 6$, or $\beta=2, \alpha\le 3$.
	
	\medskip
	
	The paper is organized as follows. In Section \ref{sec2} we explain the theoretical background of the linear programming formulation of the problem. This formulation gives a very convenient and efficient way to tackle the problem of functional tilings, while also providing non-trivial conditions for proper tilings. In Section \ref{sec3} we introduce some further auxiliary tools, similar to those developed in \cite{LaLo1}. Finally, in Section \ref{sec4} we  present a detailed account of the functional pd-tilings violating the (T2) condition for $M=p^4q^2$, thus settling Problem \ref{prob3}. We also give a minor positive result concerning Problem \ref{prob1}.
	
	\section{Linear programming constraints}\label{sec2}
	
	The aim of this section is to study tilings and functional pd-tilings with tools from linear programming. In particular, we will see that  questions of tiling are inherently related to finding maximal cliques in Cayley graphs, and Delsarte's linear programming bound is a natural tool to give upper bounds for the size of such cliques. As such, we can expect non-trivial tiling conditions to arise from linear programming methods.
	
	\medskip
	
	Given a set $A\subset \ZZ_M$, recall \cite{LaLo1} that the {\it divisor set} of $A$ is defined as $Div(A)=\{(a-a', M) : \ a, a'\in A\}$. We will use the notation $Div^\ast (A)=\cup_{d\in Div(A)}R_{d}$, i.e. $Div^\ast(A)$ is the union of step classes corresponding to the divisors of $M$ appearing in $Div(A)$. We will refer to such step classes $R_d$ ($d\in Div(A)$) as {\it divisor classes} corresponding to $A$. The following theorem of Sands is fundamental in the study of tilings of cyclic groups.
	
	\begin{theorem}[\cite{Sands}]\label{sands}
		For sets $A, B\subset \ZZ_M$, $A\oplus B=\ZZ_M$ is a tiling if and only if $|A||B|=M$ and $Div^\ast (A)\cap Div^\ast(B)=\{0\}$.
	\end{theorem}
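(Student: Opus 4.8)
The plan is to deduce Sands' criterion from the averaging construction $h_A,h_B$ of \eqref{avg}, exploiting that these step functions have nonnegative Fourier transforms. First I record the properties I will need. Writing $h_A=\frac{1}{|A|\phi(M)}\sum_{r\in R_1}\mathbf{1}_{rA}\ast\mathbf{1}_{-rA}$, I would check that: (a) $h_A(0)=1$ and the total weight is $\hat h_A(0)=|A|$, by summing the $\phi(M)$ terms, each of total weight $|A|^2$; (b) the physical support of $h_A$ is exactly $Div^\ast(A)$, because $(\mathbf{1}_{rA}\ast\mathbf{1}_{-rA})(z)>0$ iff $z\in r(A-A)$, and as $r$ ranges over $R_1$ the orbit of any $w$ with $(w,M)=d$ fills the whole step class $R_d$, so $\bigcup_{r\in R_1}r(A-A)=\bigcup_{d\in Div(A)}R_d=Div^\ast(A)$ with no cancellation since all terms are nonnegative; and (c) the Fourier support of $h_A$ equals that of $\hat{\mathbf{1}}_A$, which is precisely the cyclotomic-divisor invariance established earlier (the zero sets of $\hat h_A$ and $\hat{\mathbf 1}_A$ are the same union of step classes). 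The analogous statements hold for $h_B$, and I will use that $h_A,h_B$ and $\hat h_A,\hat h_B$ are nonnegative and real, the latter because each $h$ is real and symmetric.

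For the forward direction, assume $A\oplus B=\ZZ_M$. Then $|A||B|=M$ is immediate, and by the Fourier formulation \eqref{fou} one has $\hat{\mathbf 1}_A(\xi)=0$ or $\hat{\mathbf 1}_B(\xi)=0$ for every $\xi\ne0$; by property (c) the same holds for $\hat h_A,\hat h_B$, so $\hat h_A(\xi)\hat h_B(\xi)=0$ whenever $\xi\ne0$. The key step is then a Parseval computation combined with nonnegativity. By Plancherel,
\begin{equation*}
\sum_{z\in\ZZ_M}h_A(z)h_B(z)=\frac{1}{M}\sum_{\xi\in\ZZ_M}\hat h_A(\xi)\hat h_B(\xi)=\frac{1}{M}\hat h_A(0)\hat h_B(0)=\frac{|A||B|}{M}=1,
\end{equation*}
where the middle equality uses the vanishing of the product off $0$ and the reality of $\hat h_B$. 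On the left every summand is nonnegative, and the $z=0$ term alone equals $h_A(0)h_B(0)=1$. Hence $h_A(z)h_B(z)=0$ for all $z\ne0$, i.e. $\supp h_A\cap\supp h_B=\{0\}$, which by property (b) is exactly $Div^\ast(A)\cap Div^\ast(B)=\{0\}$.

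For the converse, assume $|A||B|=M$ and $Div^\ast(A)\cap Div^\ast(B)=\{0\}$. Since $a-a'\in R_{(a-a',M)}\subseteq Div^\ast(A)$ we have $A-A\subseteq Div^\ast(A)$ and likewise $B-B\subseteq Div^\ast(B)$, so $(A-A)\cap(B-B)\subseteq Div^\ast(A)\cap Div^\ast(B)=\{0\}$. Thus all sums $a+b$ are distinct, since $a+b=a'+b'$ forces $a-a'=b'-b\in(A-A)\cap(B-B)=\{0\}$; combined with $|A||B|=M$ this yields $A\oplus B=\ZZ_M$. The main obstacle lies entirely in the forward direction and is conceptual rather than computational: a disjointness condition on the Fourier side does not in general transfer to disjointness on the physical side, and the mechanism that makes it transfer here is the simultaneous nonnegativity of $h_A,h_B,\hat h_A,\hat h_B$ together with the normalization $h(0)=1$. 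Verifying the two support identities (b) and (c)—that averaging over $R_1$ realizes the divisor saturation on the physical side while preserving the zero pattern on the Fourier side—is the technical heart that must be in place before the positivity argument can be run.
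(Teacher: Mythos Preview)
The paper does not prove this theorem; it is quoted from \cite{Sands} as background, so there is no in-paper argument to compare against. Your proof is correct, and it is a genuinely different route from the classical one: Sands' original argument is combinatorial/group-theoretic, whereas you derive the result Fourier-analytically using the averaged autocorrelations $h_A,h_B$ that the present paper introduces for other purposes. The forward direction is the interesting one, and your mechanism---nonnegativity of $h_A,h_B,\hat h_A,\hat h_B$, the normalizations $h_A(0)=h_B(0)=1$ and $\hat h_A(0)\hat h_B(0)=|A||B|=M$, and the Fourier-side disjointness inherited from \eqref{fou} via the cyclotomic-divisor invariance---pins the Plancherel identity $\sum_z h_A(z)h_B(z)=1$ entirely on the $z=0$ term, forcing physical-side disjointness. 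The converse is elementary and your argument is the standard one.

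Two small points worth making explicit if you write this up. First, there is no circularity: all the properties (a)--(c) you invoke about $h_A,h_B$ are established in the paper directly from the definition \eqref{avg} and \eqref{trace}, without appealing to Sands' theorem. Second, in the Plancherel step you silently use that $\hat h_B$ is real so that $\overline{\hat h_B}=\hat h_B$; you do flag this (``real and symmetric''), but it would be cleaner to note that $\hat h_B\ge 0$ already follows from \eqref{trace}, which gives reality for free. What your approach buys over the original is that it stays entirely within the positive-definite/step-function framework of the paper, giving a self-contained proof that also illustrates why the functions $h_A,h_B$ are natural objects.
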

	
	It is easy to see that this theorem translates questions of tiling into questions about maximal cliques in Cayley graphs defined on $\ZZ_M$. Namely, for a set $A\subset \ZZ_M$, define the Cayley graph $\Gamma_A$ as follows: the vertices of $\Gamma_A$ are the elements of $\ZZ_M$, i.e. $\{0, 1, \dots, M-1\}$, and two distinct vertices $x, y$ are connected if and only if $x-y\in Div^\ast (A)$. By Theorem \ref{sands} we conclude that $A$ tiles $\ZZ_M$ if and only if $|A|$ divides $M$ and there exists an independent set $B$ in the graph $\Gamma_A$ (i.e. a clique $B$ in the complement graph) of size $M/|A|$.
	
	\medskip
	
	As the graph $\Gamma_A$ depends only on $Div^\ast (A)$, it is natural to introduce the following general definition.

	\begin{definition}
		Given $M$, and a collection of step classes $H=\cup_{i=1}^r R_{m_i}$ with $0\in H$, we use the notation $H'=(\ZZ_M \setminus H) \cup \{0\}$ to denote the {\em standard complement} of $H$ (the word "standard" refers to the fact that $0$ appears in both $H$ and $H'$).
		We define the graph $\Gamma_H$ such that the vertices are the elements of $\ZZ_M$, i.e. $\{0, 1, \dots, M-1\}$, and two distinct vertices $x, y$ are connected if and only if $x-y\in H$. Also, we let $\omega(H)$ denote the maximal cardinality of a clique in the graph $\Gamma_H$.
	\end{definition}
	
	\medskip
	
	With this terminology at hand, we can formulate necessary and sufficient conditions for tilings with reference only to the sets $H$ and $H'$. It is easy to see that $\omega(H)\omega(H')\le M$ for any $H$ and, as a direct corollary of Theorem \ref{sands}, the case of equality is equivalent to tiling.
	
	\begin{proposition}\label{cl3}
		Let $H=\cup_{i=1}^r R_{m_i}$, $0\in H$ be given. There exists a tiling $A\oplus B=\ZZ_M$ with $Div^\ast (A)\subset H$ and $Div^\ast (B)\subset H'$ if and only if $\omega(H) \omega(H')=M$.
	\end{proposition}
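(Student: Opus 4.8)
The plan is to recast the tiling condition in the language of cliques and then apply Sands' theorem (Theorem~\ref{sands}). The starting point is the ``easy'' inequality $\omega(H)\omega(H')\le M$. To get it, I would take an arbitrary clique $C$ in $\Gamma_H$ and an arbitrary clique $D$ in $\Gamma_{H'}$ and check that the difference map $C\times D\to\ZZ_M$, $(c,d)\mapsto c-d$, is injective: if $c_1-d_1=c_2-d_2$ then $c_1-c_2=d_1-d_2$, and were $c_1\ne c_2$ this common value would be a nonzero element of $H$ (since $C$ is a clique in $\Gamma_H$) and, as then also $d_1\ne d_2$, a nonzero element of $H'$ as well, contradicting $H\cap H'=\{0\}$. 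Hence $c_1=c_2$ and $d_1=d_2$, so $|C|\,|D|\le M$; choosing $C,D$ of maximal size gives $\omega(H)\omega(H')\le M$.

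For the forward implication, suppose $A\oplus B=\ZZ_M$ is a tiling with $Div^\ast(A)\subseteq H$ and $Div^\ast(B)\subseteq H'$. For any two distinct $a,a'\in A$ we have $a-a'\in R_{(a-a',M)}\subseteq Div^\ast(A)\subseteq H$, so $A$ is a clique in $\Gamma_H$ and $|A|\le\omega(H)$; symmetrically $|B|\le\omega(H')$. Since $A\oplus B=\ZZ_M$ forces $|A|\,|B|=M$, I obtain $M=|A|\,|B|\le\omega(H)\omega(H')\le M$, whence $\omega(H)\omega(H')=M$.

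For the converse, assume $\omega(H)\omega(H')=M$. I would pick a maximum clique $C$ in $\Gamma_H$ and a maximum clique $D$ in $\Gamma_{H'}$, so $|C|=\omega(H)$ and $|D|=\omega(H')$, and set $A=C$, $B=-D$. Here $B$ is again a clique in $\Gamma_{H'}$ because $H'$, being the standard complement of the symmetric set $H$, is symmetric. Then $|A|\,|B|=\omega(H)\omega(H')=M$. Moreover, exactly as above, for distinct $a,a'\in A$ the difference $a-a'$ lies in $H$, and since $H$ is a union of step classes this forces $R_{(a-a',M)}\subseteq H$; thus every $d\in Div(A)$ satisfies $R_d\subseteq H$, that is $Div^\ast(A)\subseteq H$, and likewise $Div^\ast(B)\subseteq H'$. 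Therefore $Div^\ast(A)\cap Div^\ast(B)\subseteq H\cap H'=\{0\}$, with equality since $0$ lies in both. By Sands' theorem, $A\oplus B=\ZZ_M$ is then a tiling, and by construction it satisfies $Div^\ast(A)\subseteq H$ and $Div^\ast(B)\subseteq H'$, as required.

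I do not expect a serious obstacle here; the argument is essentially bookkeeping. The two places where I would be careful are: the ``standard complement'' convention, which places $0$ in both $H$ and $H'$ and is precisely what makes the hypothesis $Div^\ast(A)\cap Div^\ast(B)=\{0\}$ of Sands' theorem come out correctly (rather than as an empty intersection); and the fact that $H$ is a union of \emph{whole} step classes, which is what licenses passing from ``$a-a'\in H$'' to ``$R_{(a-a',M)}\subseteq H$'', and hence to ``$Div^\ast(A)\subseteq H$''.
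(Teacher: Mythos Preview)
Your proof is correct and follows exactly the route the paper indicates: the easy inequality $\omega(H)\omega(H')\le M$ together with Sands' theorem (Theorem~\ref{sands}) gives both directions. The only superfluous step is setting $B=-D$; since $H'$ is a union of step classes and hence symmetric, you could take $B=D$ directly and the argument for $Div^\ast(B)\subseteq H'$ goes through unchanged.
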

	
	This means that given $M$ and $H$ we can, in principle, decide whether there exists a tiling $A\oplus B=\ZZ_M$ with $Div^\ast (A)\subset H$ and $Div^\ast (B)\subset H'$. In practice, however, determining the size of maximal cliques is an NP-hard problem. Of course, an upper bound on the clique number can be given by linear programming (there are several formulations of this bound -- we prefer to call it the {\it Delsarte LP bound}, cf \cite{MR}). We recall the following terminology from \cite{MR}.
	
	\medskip
	
	\begin{definition}\label{dp}
		For a collection of step classes $H=\cup_{i=1}^r R_{m_i}$, $0\in H$, and any positive number $\delta>0$, the Delsarte parameters corresponding to $H$ and $\delta$ are defined as follows:
		
		\noindent
		$D^+(H)=\max \{\sum_{z\in \ZZ_M} h(z): h\in\mathcal{A}(M), h(0)=1, \hat{h}\ge 0, h|_H\ge 0, h|_{H^c}=0\}$,
		
		\noindent
		$D^-(H)=\max \{\sum_{z\in \ZZ_M} h(z): \ h\in\mathcal{A}(M), h(0)=1, \hat{h}\ge 0, h|_{H^c}\le 0\}$,
		
		\noindent
		$D^{\delta+}(H)=\max \{\sum_{z\in \ZZ_M} h(z): \ h\in\mathcal{A}(M), h(0)=1, \hat{h}\ge 0, h|_H\ge \delta, h|_{H^c}=0\}$,
		
		\noindent and their reciprocals,
		
		\noindent
		$\lambda^+(H)=1/D^+(H)$, $\lambda^-(H)=1/D^-(H)$, $\lambda^{\delta +}(H)=1/D^{\delta+}(H)$.
	\end{definition}
	
	\medskip
	
	We remark here that in \cite{MR} the parameters $\lambda^+, \lambda^-$ were defined, but it is slightly more convenient for us here to work with their reciprocals, $D^+$ and $D^-$.
	
	\medskip

In order to gain some intuition of the concepts $D^+$, $D^-$ and $D^{\delta+}$, it is easiest to think of the continuous analogue of these quantities. If $B_1$ is the unit ball in $\RR^d$ (take $d=8$ for concreteness), then $D^+(B_1)$ is  the maximal integral of a nonnegative, positive definite function supported on $B_1$. It is known \cite{kr}, that the unique extremizer in this case is $\frac{1}{{\text{Vol}}(B_{1/2})}1_{B_{1/2}}\ast 1_{B_{1/2}}$, and $D^+(B_1)={\text{Vol}}(B_{1/2})$. For $D^{\delta+}(B_1)$  we insist that the function is at least $\delta$ over the entire ball. As such, $D^{\delta+}(B_1)$ is strictly less than $D^+(B_1)$, because the unique extremizer for $D^+(B_1)$ above is zero on the boundary. On the contrary, for $D^-(B_1)$ we allow the function to take any non-positive values outside the ball, which results in greater freedom. The quantity $D^-(B_1)$ gives a strong upper bound the density of sphere packings (in fact, the exact bound in dimension 8, c.f. \cite{via}), which is better than the volume bound given by $D^+(B_1)$. As such, for the ball $B_1$ in $\RR^8$ we have $D^{\delta+}(B_1)<D^+(B_1)<D^-(B_1)$ with strict inequalities at both places. Unfortunately, the situation is not so clear in the discrete setting of $\ZZ_M$. It is an interesting project to give necessary (or sufficient) conditions for the quantities  $D^{\delta+}(H), D^+(H), D^-(H)$ to coincide or differ.

\medskip

	There is an obvious hierarchy among the numbers $D^{\delta+}(H), D^+(H), D^-(H)$, and two of them give an upper bound on the clique number of $H$.
	
	\begin{proposition}\label{dmon}
		For any $H$ and any $\delta>0$, we have $D^{\delta +}(H)\le D^+(H)\le D^-(H)$, and
		$\omega(H)\le D^+(H)$.
	\end{proposition}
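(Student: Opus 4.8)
The two inequalities $D^{\delta+}(H)\le D^+(H)\le D^-(H)$ will follow at once by comparing the feasible regions of the three optimization problems in Definition \ref{dp}. If $h$ is admissible for $D^{\delta+}(H)$, then $h|_H\ge\delta>0$ gives in particular $h|_H\ge 0$, so $h$ is admissible for $D^+(H)$; and if $h$ is admissible for $D^+(H)$, then $h|_{H^c}=0$ gives in particular $h|_{H^c}\le 0$, while the constraint $h|_H\ge 0$ is simply not imposed in the definition of $D^-$, so $h$ is admissible for $D^-(H)$. Passing to suprema yields both inequalities.

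For $\omega(H)\le D^+(H)$ the plan is to exhibit a single function admissible for $D^+(H)$ whose total weight equals $\omega(H)$; this is the usual Delsarte-type argument. Let $C\subset\ZZ_M$ be a clique in $\Gamma_H$ with $|C|=\omega(H)$, and set $g=\frac{1}{|C|}\one_C\ast\one_{-C}$. Then $g(0)=1$, $g\ge 0$, and $\hat g=\frac{1}{|C|}|\hat\one_C|^2\ge 0$, while $\sum_{z}g(z)=\frac{1}{|C|}\bigl(\sum_z\one_C(z)\bigr)^2=|C|$. The support of $g$ lies in the difference set $\{c-c':c,c'\in C\}$, which is contained in $H$: the difference of two distinct vertices of a clique of $\Gamma_H$ lies in $H$ by definition of the graph, and the diagonal contributes only $0\in H$. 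Hence $g|_{H^c}=0$ and $g|_H\ge 0$.

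The only remaining point is that $g$ need not be a step function, so it is not literally admissible in Definition \ref{dp}; to remedy this I would average over dilations by units exactly as in \eqref{avg}, putting $g^{(r)}(z)=g(rz)$ for $r\in R_1$ and $\bar g=\frac{1}{\phi(M)}\sum_{r\in R_1}g^{(r)}$, which is a step function. Each map $z\mapsto rz$ with $(r,M)=1$ is a bijection of $\ZZ_M$ fixing each step class, so $\bar g(0)=1$, $\sum_z\bar g(z)=|C|$, and $\bar g\ge 0$; since $\widehat{g^{(r)}}(\xi)=\hat g(r^{-1}\xi)\ge 0$ we get $\hat{\bar g}\ge 0$; and since $H$ is a union of step classes, dilation by a unit maps $H^c$ into $H^c$, so $g|_{H^c}=0$ forces $g^{(r)}|_{H^c}=0$ for all such $r$, hence $\bar g|_{H^c}=0$ and $\bar g|_H\ge 0$. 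Thus $\bar g$ is admissible for $D^+(H)$ with total weight $\omega(H)$, giving $D^+(H)\ge\omega(H)$. I do not expect a genuine obstacle in any of this; the only place calling for a little care is verifying that the averaging step preserves positive-definiteness of the Fourier transform and keeps the support inside $H$, both of which rely on $(r,M)=1$ and on $H$ being a union of step classes.
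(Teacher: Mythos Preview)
Your proof is correct and follows essentially the same approach as the paper. The paper's version is simply more terse: it invokes the already-constructed averaged function $h_A$ from \eqref{avg} (which is exactly your $\bar g$ with $A=C$) and asserts its admissibility in one parenthetical remark, whereas you spell out each verification explicitly.
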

	
	\begin{proof}
		The first inequalities are obvious because of the hierarchy of the constraints in the linear programs defining them. The second inequality follows from the fact that for any clique $A$ in $\Gamma_H$ the function $h_A$ defined in \eqref{avg} is an admissible function in the definition of $D^+(H)$, and $\sum_{z\in \ZZ_M} h_A(z)=|A|$. (The function $h_A$ is indeed admissible, as it is a nonnegative, positive definite step function, supported on $\cup_{r\in R_1} r(A-A)\subset H$, and   $h_A(0)=1$.)
	\end{proof}
	
	\medskip
	
	There is an important duality between these Delsarte parameters.
	
	\begin{proposition}[\cite{MR}]\label{dual}
		For any $H$ we have $D^+(H)D^-(H')=M$.
	\end{proposition}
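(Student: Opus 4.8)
\emph{Proof sketch.}
The plan is to establish the two inequalities $D^+(H)D^-(H')\le M$ and $D^+(H)D^-(H')\ge M$ separately, the first by an explicit test‑function (weak duality) argument and the second by strong linear programming duality. Throughout I will use that every $h\in\mathcal{A}(M)$ is real‑valued and even (each step class $R_d$ being symmetric), that $\sum_z h(z)=\hat h(0)$ while $h(0)=\tfrac1M\sum_\xi\hat h(\xi)$, that the Fourier transform is — up to the scalar $M$ — an involution on $\mathcal{A}(M)$ with the coefficients furnished by Lemma \ref{fl}, and the elementary fact that the standard complement satisfies $(H')^c=H\setminus\{0\}$ (since $0\in H$).

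For ``$\le$'': let $h$ be admissible in the definition of $D^+(H)$ and $g$ admissible in the definition of $D^-(H')$. Since $h\ge 0$, $h|_{H^c}=0$, $g|_{H\setminus\{0\}}\le 0$ and $h(0)=g(0)=1$, the product $h(z)g(z)$ is $\le 0$ for every $z\ne 0$ and equals $1$ at $z=0$, so $\sum_z h(z)g(z)\le 1$. By Parseval and the evenness of $g$ we get $\sum_\xi \hat h(\xi)\hat g(\xi)=M\sum_z h(z)g(z)\le M$; since $\hat h\ge 0$ and $\hat g\ge 0$, dropping all terms but $\xi=0$ gives $\hat h(0)\hat g(0)\le \sum_\xi \hat h(\xi)\hat g(\xi)\le M$, i.e. $\bigl(\sum_z h(z)\bigr)\bigl(\sum_z g(z)\bigr)\le M$. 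Taking the supremum over $h$ and then over $g$ yields $D^+(H)D^-(H')\le M$.

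For ``$\ge$'' I would pass to the Fourier side. Writing $u=\hat h$, one checks that $D^+(H)=\max\{u(0):\ u\in\mathcal{A}(M),\ u\ge 0,\ \sum_\xi u(\xi)=M,\ \hat u\ge 0,\ \hat u|_{H^c}=0\}$, and by homogeneity $M/D^+(H)=\min\{\sum_\xi u(\xi):\ u\in\mathcal{A}(M),\ u\ge 0,\ u(0)=1,\ \hat u\ge 0,\ \hat u|_{H^c}=0\}$; likewise $D^-(H')=\max\{\sum_z g(z):\ g(0)=1,\ \hat g\ge 0,\ g|_{H\setminus\{0\}}\le 0\}$. One then writes the formal linear‑programming dual of the displayed minimization program — its variables being one free scalar for the equality $u(0)=1$ together with nonnegative multipliers (recorded as a step function) for the inequalities $\hat u\ge 0$, with the constraint matrix given explicitly by Lemma \ref{fl} — and verifies that, after the change of variables dictated by the involutivity of the Fourier transform and the appropriate renormalization, this dual is precisely the maximization program defining $D^-(H')$. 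Both programs are feasible (the function $\mathbf{1}_{\{0\}}$ is admissible in each, exactly because $0$ lies in both $H$ and $H'$) and bounded (positive‑definiteness gives $\sum_z h(z)\le M\,h(0)$ for any $h$ admissible in $D^+(H)$, and $\hat g(0)\le \sum_\xi \hat g(\xi)=M\,g(0)$ for any $g$ admissible in $D^-(H')$). Hence strong duality applies: the optima are attained and $D^-(H')=M/D^+(H)$, which together with the first part finishes the proof.

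The main obstacle is the bookkeeping in this last step, namely pinning down that the formal dual of the Delsarte program for $D^+(H)$ is exactly the Delsarte program for $D^-(H')$ and not some nearby relative. This requires care in (i) translating, under dualization, the support constraint $\hat u|_{H^c}=0$ into a sign constraint (and, conversely, the sign constraint $g|_{H\setminus\{0\}}\le 0$ into a support statement on $\hat g$), (ii) correctly attaching free versus sign‑constrained multipliers to equality versus inequality constraints, and (iii) tracking the normalization by which $0$ is retained in both $H$ and $H'$ — this is precisely what makes the reciprocal relation come out with the clean constant $M$ rather than being off by the cardinality of a step class. As a consistency check, the explicit inequality of the ``$\le$'' paragraph is exactly the weak‑duality bound for the primal–dual pair just described, so matching it confirms that the dual has been identified correctly. $\qed$
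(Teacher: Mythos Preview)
The paper does not give its own proof here but simply cites Theorem~4.2 of \cite{MR}, so there is nothing in the present paper to compare against line by line. Your sketch is precisely the standard Delsarte LP--duality argument that \cite{MR} carries out: the ``$\le$'' half is weak duality done cleanly via Parseval (your use of evenness of step functions to get $\sum_\xi \hat h(\xi)\hat g(\xi)=M\sum_z h(z)g(z)$ is correct, and the sign analysis on $H\setminus\{0\}$ versus $H^c$ is exactly right since $(H')^c=H\setminus\{0\}$), and the ``$\ge$'' half is strong LP duality after the Fourier change of variables and homogeneity rescaling you describe. Your feasibility and boundedness checks are valid, so strong duality does apply to these finite-dimensional programs; the only thing left unwritten is the mechanical verification that the formal dual of the reformulated $D^+(H)$ program coincides with the $D^-(H')$ program, which you correctly flag as the bookkeeping step and whose consistency is indeed witnessed by your weak-duality inequality.
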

	\begin{proof}
		This is the content of Theorem 4.2 in \cite{MR}.
	\end{proof}
	
	We remark here that the main use of the Delsarte parameters is to give an upper bound on the clique numbers of Cayley graphs via this duality. Indeed,
	$\omega(H)\le D^+(H)=\frac{M}{D^-(H')}\le \frac{M}{\hat{h}(0)}$ for any admissible function $h$ in the definition of $D^-(H')$. Therefore, any admissible function leads to an upper bound on $\omega(H)$.
	
	\medskip
	
	Next, we give a connection of these parameters to tilings.
	
	\begin{proposition}\label{divh}
		Let $H=\cup_{i=1}^r R_{m_i}$, $0\in H$ be given. If there exists a tiling $A\oplus B=\ZZ_M$ with $Div^\ast (A)\subset H$ and $Div^\ast (B)\subset H'$, then  $|A|=\omega(H)=D^+(H)=D^-(H)$ and $|B|=\omega(H')=D^+(H')=D^-(H')$.
	\end{proposition}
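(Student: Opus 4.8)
The plan is to stitch together three facts already in hand: that the hypotheses force $A$ and $B$ to be cliques in $\Gamma_H$ and $\Gamma_{H'}$ respectively (via Sands' Theorem \ref{sands} / Proposition \ref{cl3}), the monotonicity $\omega(H)\le D^+(H)\le D^-(H)$ of Proposition \ref{dmon}, and the duality $D^+(H)D^-(H')=M$ of Proposition \ref{dual}. The whole argument is a short squeezing of the Delsarte parameters between $\omega(H)$ and $|A|$.

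First I would note that $Div^\ast(A)\subset H$ means that any two distinct elements $a,a'\in A$ differ by an element of $R_{(a-a',M)}\subset Div^\ast(A)\subset H$, so $A$ is a clique in $\Gamma_H$ and $|A|\le\omega(H)$; symmetrically $|B|\le\omega(H')$. Since $A\oplus B=\ZZ_M$ forces $|A||B|=M$, and since $\omega(H)\omega(H')\le M$ always holds (the remark preceding Proposition \ref{cl3}), we get
\[
M=|A||B|\le\omega(H)\omega(H')\le M,
\]
and hence $|A|=\omega(H)$ and $|B|=\omega(H')$.

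Next I would bound $D^+(H)$ and $D^-(H)$ from above. By Proposition \ref{dmon} we already have $\omega(H)\le D^+(H)\le D^-(H)$. For the reverse, the duality $D^+(H)=M/D^-(H')$ combined with $D^-(H')\ge D^+(H')\ge\omega(H')=|B|$ yields $D^+(H)\le M/|B|=|A|=\omega(H)$; likewise $D^-(H)=M/D^+(H')$ together with $D^+(H')\ge\omega(H')=|B|$ yields $D^-(H)\le M/|B|=|A|=\omega(H)$. Therefore $\omega(H)=D^+(H)=D^-(H)=|A|$.

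Finally, the statement for $H'$ follows by symmetry. One checks that $(H')'=H$ (because $0\in H$, the standard complement of the standard complement returns $H$), so the tiling, viewed as $B\oplus A=\ZZ_M$ with $Div^\ast(B)\subset H'$ and $Div^\ast(A)\subset(H')'$, satisfies the hypotheses with the roles of $H$ and $H'$ interchanged; applying the above gives $|B|=\omega(H')=D^+(H')=D^-(H')$. I do not expect any genuine obstacle; the only points requiring a little care are keeping the two duality identities $D^+(H)D^-(H')=M$ and $D^+(H')D^-(H)=M$ straight, and recording the elementary fact $(H')'=H$.
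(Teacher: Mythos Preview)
Your proof is correct and follows essentially the same approach as the paper: both arguments squeeze the chain $|A|\le\omega(H)\le D^+(H)\le D^-(H)$ against the duality relation and the tiling constraint $|A||B|=M$. The paper compresses everything into a single string of inequalities $|A|\le\omega(H)\le D^+(H)\le D^-(H)=M/D^+(H')\le M/\omega(H')\le M/|B|=|A|$, whereas you first isolate $|A|=\omega(H)$ and $|B|=\omega(H')$ via $\omega(H)\omega(H')\le M$ and then handle the Delsarte parameters; the ingredients and the logic are the same.
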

	
	\begin{proof}
		Due to the assumed tiling, and Propositions \ref{dmon} and \ref{dual}, we have $|A|\le \omega(H)\le D^+(H)\le D^-(H)=\frac{M}{D^+(H')}\le \frac{M}{\omega(H')}\le \frac{M}{|B|}=|A|$. Therefore, equality must hold everywhere.
		
		The same argument holds for $H'$.
	\end{proof}
	
	\medskip
	
	There are two important points here: first, $D^+(H)=D^-(H)$ is a non-trivial linear programming constraint on $H$ and, second, the equalities $|A|=D^+(H)$, $|B|=D^+(H')$ determine the only possible sizes of $A$ and $B$ for a given $H$.
	
	\medskip
	
	Notice that we have assumed $Div^\ast (A)\subset H$. If, instead, we require $Div^\ast (A)=H$, we obtain the following additional  condition on $H$.
	
	\begin{proposition}\label{divh2}
		Let $H=\cup_{i=1}^r R_{m_i}$, $0\in H$ be given, and let $\delta=\frac{1}{M\phi(M)}$. If there exists a tiling $A\oplus B=\ZZ_M$ with $Div^\ast (A)= H$, then  $|A|=\omega(H)=D^{\delta+}(H)=D^+(H)=D^-(H)$, and $|B|=\omega(H')=D^+(H')=D^-(H')$.
	\end{proposition}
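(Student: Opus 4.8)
The plan is to build on the proof of Proposition~\ref{divh} and simply add the observation that $\delta$-positivity is free when $Div^\ast(A)=H$. First I would recall that, by Proposition~\ref{dmon}, we always have $D^{\delta+}(H)\le D^+(H)\le D^-(H)$, and that Proposition~\ref{divh} already gives, under the weaker hypothesis $Div^\ast(A)\subset H$, the chain of equalities $|A|=\omega(H)=D^+(H)=D^-(H)$ and $|B|=\omega(H')=D^+(H')=D^-(H')$; since $Div^\ast(A)=H$ certainly implies $Div^\ast(A)\subset H$, all of these equalities are in force here. So the only thing left to prove is $D^{\delta+}(H)\ge |A|$, which combined with $D^{\delta+}(H)\le D^+(H)=|A|$ pins it down.

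To get $D^{\delta+}(H)\ge |A|$ I would exhibit an admissible function for the program defining $D^{\delta+}(H)$ with total weight $|A|$. The natural candidate is the averaged autocorrelation $h_A$ from \eqref{avg}, namely $h_A=\frac{1}{|A|\phi(M)}\sum_{r\in R_1}\mathbf{1}_{rA}\ast\mathbf{1}_{-rA}$. As already noted in the proof of Proposition~\ref{dmon}, $h_A$ is a nonnegative, positive-definite step function with $h_A(0)=1$ and $\sum_{z}h_A(z)=|A|$, and it is supported on $\cup_{r\in R_1}r(A-A)$. The point is to check the two support conditions required by $D^{\delta+}(H)$: that $h_A$ vanishes off $H$ and is at least $\delta=\frac{1}{M\phi(M)}$ on $H$. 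For the first, the support $\cup_{r\in R_1}r(A-A)$ equals $Div^\ast(A)=H$ exactly (this is where the stronger hypothesis $Div^\ast(A)=H$, rather than merely $\subset H$, is used), so $h_A|_{H^c}=0$. For the second, on any nonzero $z\in H$ the function $h_A(z)$ is a positive value of the nonnegative integer-valued function $\frac{1}{|A|\phi(M)}\sum_r\mathbf{1}_{rA}\ast\mathbf{1}_{-rA}$, hence by the estimate \eqref{eps} it is at least $\frac{1}{|A|\phi(M)}\ge\frac{1}{M\phi(M)}=\delta$; and at $z=0$ we have $h_A(0)=1\ge\delta$. Thus $h_A$ is admissible for $D^{\delta+}(H)$ and $D^{\delta+}(H)\ge|A|$, so $D^{\delta+}(H)=|A|$, completing the added equality. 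The statement for $H'$ is identical to Proposition~\ref{divh} (no $\delta$-condition is claimed for $H'$, since we only assumed $Div^\ast(A)=H$, not $Div^\ast(B)=H'$).

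I do not expect a genuine obstacle here; the only slightly delicate point is making sure that $Div^\ast(A)=H$ — as opposed to $\subset H$ — really does force the support of $h_A$ to be all of $H$ and not a proper subcollection of step classes, which is exactly the content of $\supp(h_A)=\cup_{r\in R_1}r(A-A)=Div^\ast(A)$, using that each step class is a single orbit of the multiplicative action of $R_1$. Once that is observed, the rest is the same bookkeeping as in Propositions~\ref{dmon}, \ref{dual} and \ref{divh}.
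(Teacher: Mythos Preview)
Your proposal is correct and follows essentially the same approach as the paper: reduce to Proposition~\ref{divh}, then note that under the stronger hypothesis $Div^\ast(A)=H$ the averaged function $h_A$ is admissible for $D^{\delta+}(H)$ (with the $\delta$-lower bound coming from \eqref{eps}), yielding $|A|\le D^{\delta+}(H)\le D^+(H)=|A|$. Your write-up is more explicit about why the support of $h_A$ equals all of $H$, but the argument is the same.
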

	
	\begin{proof}
		Due to the previous proposition, we only need to prove $|A|=D^{\delta+}(H)$.
		
		\medskip
		
		As $Div^\ast (A)=H$, the function $h_A$ defined in \eqref{avg} is an admissible function in the class of $D^{\delta+}(H)$. As such, $|A|\le D^{\delta+}(H)\le D^+(H)=|A|$, where the last equality follows from the previous proposition.
	\end{proof}
	So far, we have not considered the Coven-Meyerowitz conditions (T1) and (T2), due to the fact that these conditions are associated with the cyclotomic factors of $A(X)$ and $B(X)$, and not the divisor classes $Div^\ast(A), Div^\ast(B)$. We will now consider the Fourier transforms $\hat{h}_A$ and $\hat{h}_B$, and make use of condition (T1).
	
	\medskip

	Recall that for a divisor $d|M$ we have $\Phi_d(X)| A(X)$ if and only if $\hat{\mathbf{1}}_A(M/d)=0$, which happens if and only if $\hat{\mathbf{1}}_A$ vanishes on the class $R_{M/d}$.
	
	\medskip
	
	Assume that $A\oplus B=\ZZ_M$ is a tiling. As explained in the introduction, for any $d|M$ ($d\ne 1$) we have $\Phi_d(X)|A(X)$ or $\Phi_d(X)|B(X)$, or possibly both.
	However, when $d$ is a prime-power,  condition (T1) ensures that exactly one of the two divisibilities holds. That is, for any $p^\alpha|M$ we have either
	$\Phi_{p^\alpha}(X)|A(X)$ or $\Phi_{p^\alpha}(X)|B(X)$, but not both. Also, condition (T1) makes a connection between the cardinalities of $A$ and $B$, and the cyclotomic divisors $\Phi_{p^\alpha}(X)$. This motivates the following definition.
	
	\medskip
	
	\begin{definition}
		Let $H=\cup_{i=1}^r R_{m_i}$, $0\in H$ be given. Let $S_{H}$ denote the set of prime-powers $p^\alpha$ such that $M/p^\alpha$ is among the indices $m_i$. Let $k_H=\prod_{p^\alpha\in S_H} \Phi_{p^\alpha}(1)$.
	\end{definition}
	
	For any $H$ and its standard complement $H'$, and any prime power $p^\alpha|M$, the class $R_{M/p^\alpha}$ appears either in $H$ or in $H'$ (but not both). Also, $\Phi_{p^\alpha}(1)=p$. Therefore we have
	
	\begin{equation}\label{kh}
		k_H k_{H'}=M.
	\end{equation}

	\medskip
	
	The question arises: how can we recognize if $H=\cup_{i=1}^r R_{m_i}$, $0\in H$, is the support of the Fourier transform of $\mathbf{1}_A$ for a set $A$ appearing in a tiling $A\oplus B=\ZZ_M$. We have the following important necessary conditions.
	
	\begin{proposition}\label{hat1}
		Let $\delta=\frac{1}{M^2\phi(M)}>0$, and let $H=\cup_{i=1}^r R_{m_i}$, $0\in H$ be given. If there exists a tiling $A\oplus B=\ZZ_M$ with $H={\emph{supp}}|\hat{\mathbf{1}}_A|^2$, then $D^{\delta+}(H)=D^+(H)=D^-(H)=k_{H}=|B|$, and  $D^+(H')=D^-(H')=k_{H'}=|A|$.
	\end{proposition}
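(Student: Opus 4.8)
The plan is to trap every quantity in the statement between $|A|$ and $|B|$, in the same spirit as Propositions~\ref{divh} and \ref{divh2} but now on the Fourier side. On the lower side we exhibit explicit admissible functions for the relevant Delsarte programs, obtained by Fourier-transforming the averaged autocorrelations $h_A,h_B$ of \eqref{avg}; on the upper side we invoke the monotonicity and duality of Propositions~\ref{dmon} and \ref{dual} to force all inequalities to be equalities. The identities $k_H=|B|$ and $k_{H'}=|A|$ are extracted separately from condition (T1).

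First I would record two structural facts. Since $A\oplus B=\ZZ_M$, by periodicity both $A$ and $B$ tile $\ZZ$, so both satisfy (T1) by Theorem~\ref{CMthm}. Moreover $S_A\cap S_B=\emptyset$: by \eqref{cycfact} every prime power $p^\alpha\mid M$ has $\Phi_{p^\alpha}$ dividing $A(X)$ or $B(X)$, so $S_A\cup S_B$ is the set of all prime powers dividing $M$, whose $\Phi(1)$-values multiply to $M$; hence if some $p^\alpha$ lay in both sets, (T1) would give $|A|\,|B|=\big(\prod_{s\in S_A}\Phi_s(1)\big)\big(\prod_{s\in S_B}\Phi_s(1)\big)=M\cdot\prod_{s\in S_A\cap S_B}\Phi_s(1)>M$, a contradiction. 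Now $\Phi_{p^\alpha}(X)\mid A(X)$ is equivalent to $\hat{\mathbf{1}}_A$ vanishing on $R_{M/p^\alpha}$, so the step class $R_{M/p^\alpha}$ belongs to $H=\supp\hat{\mathbf{1}}_A$ exactly when $\Phi_{p^\alpha}(X)\nmid A(X)$, i.e.\ when $p^\alpha\in S_B$; therefore $S_H=S_B$ and $k_H=\prod_{s\in S_B}\Phi_s(1)=|B|$ by (T1) for $B$, and complementarily $S_{H'}=S_A$ and $k_{H'}=|A|$.

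The heart of the argument is the test functions. Put $h:=\frac{1}{|A|}\widehat{h}_A$. By \eqref{trace}, $\widehat{h}_A(\xi)=\frac{1}{|A|\phi(M)}\sum_{r\in R_1}|\hat{\mathbf{1}}_A(r\xi)|^2$, and since $\hat{\mathbf{1}}_A$ vanishes either on all or on none of a step class while $\{r\xi:r\in R_1\}$ exhausts the step class of $\xi$, this is nonzero precisely when $\xi\in\supp\hat{\mathbf{1}}_A=H$; thus $\supp\widehat{h}_A=H$ and $h$ is supported on $H$. Then $h\in\mathcal{A}(M)$ (the Fourier transform of a step function is a step function, Lemma~\ref{fl}), $h\ge 0$ because $\widehat{h}_A\ge 0$, $h(0)=\frac{1}{|A|}\widehat{h}_A(0)=1$ since $\widehat{h}_A(0)=\sum_z h_A(z)=|A|$, the Fourier transform satisfies $\hat{h}(\xi)=\frac{M}{|A|}h_A(-\xi)\ge 0$ by inversion, and $\sum_z h(z)=\hat{h}(0)=\frac{M}{|A|}=|B|$. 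Finally, on $H$ the quantity $\widehat{h}_A$ is a nonzero value of the Fourier transform of the integer-valued averaged autocorrelation, hence $\ge\frac{1}{|A|\phi(M)}$ by \eqref{trh}, so $h\ge\frac{1}{|A|^2\phi(M)}\ge\frac{1}{M^2\phi(M)}=\delta$ on $H$ — which is precisely the value of $\delta$ in the statement. Hence $h$ is admissible for $D^{\delta+}(H)$, so $D^{\delta+}(H)\ge|B|$. Running the same construction with $B$ in place of $A$ gives $h^{\ast}:=\frac{1}{|B|}\widehat{h}_B$, supported on $\supp\widehat{h}_B=\supp\hat{\mathbf{1}}_B\subseteq H'$ (the inclusion because, by \eqref{fou}, $\hat{\mathbf{1}}_A(\xi)\ne 0$ forces $\hat{\mathbf{1}}_B(\xi)=0$ for $\xi\ne 0$), admissible for $D^+(H')$, with $\sum_z h^{\ast}(z)=|A|$; thus $D^+(H')\ge|A|$.

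It remains to close the loop. By Propositions~\ref{dmon} and \ref{dual},
\[
|B|\le D^{\delta+}(H)\le D^+(H)\le D^-(H)=\frac{M}{D^+(H')}\le\frac{M}{|A|}=|B|,
\]
so $D^{\delta+}(H)=D^+(H)=D^-(H)=|B|=k_H$; and then $D^-(H')=M/D^+(H)=M/|B|=|A|$, which together with $|A|\le D^+(H')\le D^-(H')$ forces $D^+(H')=D^-(H')=|A|=k_{H'}$, proving the proposition. The one genuinely non-routine step is realizing that the Fourier transforms $\widehat{h}_A$ and $\widehat{h}_B$ of the averaged autocorrelations are the admissible functions one needs, and that the pointwise lower bound they satisfy on their supports is exactly the $\delta=\frac{1}{M^2\phi(M)}$ of the statement; everything else is assembly of facts already proved.
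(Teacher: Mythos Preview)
Your proof is correct and follows essentially the same approach as the paper: both build the test functions $\frac{1}{|A|}\widehat{h}_A$ and $\frac{1}{|B|}\widehat{h}_B$ for the Delsarte programs $D^{\delta+}(H)$ and $D^+(H')$, then close the chain of inequalities via Propositions~\ref{dmon} and~\ref{dual}. The only cosmetic difference is that the paper identifies $S_A=S_{H'}$ and applies (T1) to $A$ to get $|A|=k_{H'}$ (then deduces $|B|=k_H$ from $|A||B|=M=k_Hk_{H'}$), whereas you first argue $S_A\cap S_B=\emptyset$ to identify $S_H=S_B$ directly and apply (T1) to $B$; these are dual versions of the same computation.
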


	\begin{proof}
		The proof exploits the fact that given any tiling $A\oplus B=\ZZ_M$, the functions $h_A$, $h_B$ (as defined in \eqref{avg}) and $\hat{h}_A$, $\hat{h}_B$ are all extremal in their respective $D^+$-classes. We can make this claim rigorous as follows.
		
		\medskip
		
		It is enough to prove
		
		\begin{equation}\label{h}
			D^{\delta+}(H)=D^+(H)=D^-(H)=k_{H}=|B|,
		\end{equation}
		because the second chain of equalities will be implied by the following duality relations (cf. Proposition \ref{dual} and equation \eqref{kh}):
		
		$$
		D^+(H')=\frac{M}{D^{-}(H)}, D^-(H')=\frac{M}{D^{+}(H)}, k_{H'}=\frac{M}{k_H}, |A|=\frac{M}{|B|},
		$$
		and all quantities are equal to the right hand side by \eqref{h}, so the left hand sides must also be equal.
		
		\medskip
		
		To prove \eqref{h}, notice first the following equivalent statements for any prime power $p^\alpha|M$: \\
        $p^\alpha\in S_A \Leftrightarrow \Phi_{p^\alpha}(X)|A(X)\Leftrightarrow \hat{1}_A$ vanishes on $R_{M/p^\alpha}$
        $\Leftrightarrow R_{M/p^\alpha}\subset H' \Leftrightarrow p^\alpha\in S_{H'}$. The first equivalence
        is just the definition of $S_A$, the second follows form \eqref{cycfou2}, the third follows from the assumption that the support of $\hat{1}_A$ is exactly $H$, while the last is just the definition of $S_{H'}$. Hence, invoking the  (T1) tiling condition of Coven-Meyerowitz in Theorem \ref{CMthm}, we obtain $|A|=\prod_{p^\alpha\in S_A}\Phi_{p^\alpha}(1)=\prod_{p^\alpha\in S_{H'}}\Phi_{p^\alpha}(1)=k_{H'}$.
        Furthermore, by $A\oplus B=\ZZ_M$ we have $|A||B|=M$, and by equation \eqref{kh} we have $k_Hk_{H'}=M$, which implies $|B|=k_{H}$.
		
		\medskip
		
		Furthermore, let $h_A$ be as in equation \eqref{avg}, and $u=\frac{1}{|A|}\hat{h}_A$. Then $\supp u =H$ by assumption,
		$u(0)=1$, $\hat u\ge 0$, and due to \eqref{trh} we have $u(z)\ge \frac{1}{|A|^2\phi(M)}\ge\delta$ for all $z\in H$ . This shows that $u$ is an admissible function in the class $D^{\delta +}(H)$. As  $\sum_{z\in \ZZ_M} u(z)= \hat{u}(0)=\frac{M}{|A|}h_A(0)=|B|$, we conclude $|B|\le D^{\delta +}(H)$.
		
		\medskip
		
		Similarly, for $v=\frac{1}{|B|} \hat{h}_B$ we have that $\supp v\subset H'$ (by the tiling condition the supports must be essentially disjoint),  $v(0)=1$, $\hat v\ge 0$, $\sum_{z\in \ZZ_M} v(z)=|A|$. This shows that $|A|\le D^+(H')$.
		
		\medskip
		
		Finally, putting everything together, we have $k_H=|B|\le D^{\delta +}(H)\le D^{+}(H)\le D^{-}(H)=\frac{M}{D^+(H')}\le \frac{M}{|A|}=|B|$, and hence equality must hold everywhere.
	\end{proof}
	
	The chain of equalities $D^{\delta+}(H)=D^+(H)=D^-(H)=k_H$ puts severe restrictions on $H$. While these restrictions fall short of proving the Coven-Meyerowitz conjecture, they rule out the vast majority of sets $H=\cup_{i=1}^r R_{m_i}$ as candidates for the support of the Fourier transform of a tile in $\ZZ_M$. As an indication, in Section \ref{app} we will provide some statistics of a computer experiment carried out for a specific value of $M$.
	
	\section{Cyclotomic divisibility for averaged functions}\label{sec3}
	
	In connection to cyclotomic divisibility, the class of step functions enjoys another useful property. Strictly speaking, the results of this section are not needed to verify the forthcoming examples in Section \ref{sec4}. However, we include them for two reasons. One is that Proposition \ref{avgcyc} below gives a potentially useful tool in studying cyclotomic divisibility of step functions in the future. The second is that we have actually used these conditions in {\it producing} the examples presented in Section \ref{sec4}.

    \medskip

    In service of our discussion we borrow some notation  from \cite{LaLo1} for (classic) $M$-cuboids.
	\begin{definition}
		Let $M=\prod_{i=1}^K p_i^{n_i}$, where $p_i$ are distinct primes and $n_i\geq 1$ are integers. Let $f:\ZZ_M\rightarrow \RR$.
		\begin{itemize}
			\item An $M$-cuboid is a weighted set corresponding to the mask polynomial of the form
			
			\begin{equation}\label{cub}\Delta(X)=X^{c} \prod_{i=1}^K (1-X^{d_i}),
			\end{equation}
			where $c\in \ZZ_M, d_i=\rho_i M/p_i$ with $\rho_i\in \{1,\ldots, p_i-1\}$ for all $i$.
		
			\item The associated $\Delta$-evaluation of $f$ is
			
			\begin{equation}\label{cubeval}
				F[\Delta]=\sum_{\overrightarrow{\epsilon}\in \{0,1\}^K} (-1)^{\sum_{j=1}^K \epsilon_j} f(x_\epsilon),
			\end{equation}
			where $x_\epsilon=c+\sum_{j=1}^K \epsilon_jd_j$ {\em (mod} $M${\em )} are the vertices of $\Delta$.
		\end{itemize}
	\end{definition}
	
	There is a well-known cyclotomic divisibility test given in terms of cuboids, which was first observed by Steinberger \cite{Steinberger}, and later reestablished in \cite{KMSV, LaLo1}.

	\begin{proposition}\label{cuboid}
		Let $f:\ZZ_M\rightarrow\RR$. Then the following are equivalent:
		\begin{enumerate}
			\item[(i)] $\Phi_M(X)|F(X)$,
			\item[(ii)] For all $M$-cuboids $\Delta$, we have $F[\Delta]=0$.
		\end{enumerate}
	\end{proposition}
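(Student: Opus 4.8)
The plan is to pass to the frequency side. Recall that, by the extension of the cyclotomic notation to arbitrary functions introduced in the paper (the analogue of \eqref{cycfou} with $m=M$), the divisibility $\Phi_M(X)\mid F(X)$ means precisely that $F(\eta^\xi)=\hat f(\xi)=0$ for every $\xi$ with $(\xi,M)=1$, i.e. that $\hat f$ vanishes identically on the step class $R_1$. So the whole statement reduces to showing that the $\Delta$-evaluations $F[\Delta]$ ``see'' exactly the restriction of $\hat f$ to $R_1$, and nothing else.

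To make this precise I would substitute the Fourier inversion formula $f(z)=\frac1M\sum_{\xi\in\ZZ_M}\hat f(\xi)\,\eta^{-z\xi}$ (with $\eta=e^{-2\pi i/M}$, as in the paper) into the definition \eqref{cubeval}. Since $x_\epsilon=c+\sum_j\epsilon_jd_j$, the sum over $\overrightarrow{\epsilon}\in\{0,1\}^K$ factors coordinatewise into a product of binomial terms, and one obtains, after a routine computation,
\[
F[\Delta]=\frac1M\sum_{\xi\in\ZZ_M}\hat f(\xi)\,\eta^{-c\xi}\prod_{j=1}^K\bigl(1-\eta^{-d_j\xi}\bigr).
\]
The arithmetic input is then: $d_j=\rho_jM/p_j$ gives $\eta^{-d_j\xi}=e^{2\pi i\rho_j\xi/p_j}$, which equals $1$ exactly when $p_j\mid\rho_j\xi$; and since $1\le\rho_j\le p_j-1$ forces $p_j\nmid\rho_j$, this occurs if and only if $p_j\mid\xi$. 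Hence the $j$-th factor vanishes precisely when $p_j\mid\xi$, so the product over $j=1,\dots,K$ is nonzero if and only if $\xi$ is coprime to $p_1\cdots p_K$, i.e. if and only if $\xi\in R_1$; and in that case every factor is a nonzero complex number. This collapses the displayed sum to $\xi\in R_1$ only.

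With this formula in hand, both implications fall out. If $\Phi_M(X)\mid F(X)$, then $\hat f\equiv 0$ on $R_1$, so the (already truncated) sum is empty and $F[\Delta]=0$ for every $M$-cuboid $\Delta$; this is (i)$\Rightarrow$(ii). For (ii)$\Rightarrow$(i) I would use only the cuboids with $\rho_1=\dots=\rho_K=1$; write $\Delta_c$ for this cuboid with translation parameter $c\in\ZZ_M$. The hypothesis says that $c\mapsto F[\Delta_c]$ is the zero function on $\ZZ_M$, but by the formula above this map is, up to the constant $1/M$, the inverse Fourier transform of the function $\xi\mapsto\hat f(\xi)\prod_{j=1}^K\bigl(1-e^{2\pi i\xi/p_j}\bigr)$, which is supported on $R_1$. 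By uniqueness of the Fourier transform this function is identically zero; since the product is nonzero for every $\xi\in R_1$, we conclude $\hat f(\xi)=0$ for all $\xi\in R_1$, i.e. $\Phi_M(X)\mid F(X)$.

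I do not expect a serious obstacle: once the right substitution is made the argument is essentially bookkeeping. The only points needing care are the sign/conjugation conventions hidden in $\eta=e^{-2\pi i/M}$ (harmless, since we only ever use whether a factor is zero or nonzero), and the observation that the hypothesis $\rho_i\in\{1,\dots,p_i-1\}$ in the definition of an $M$-cuboid is exactly what guarantees the factors $1-\eta^{-d_i\xi}$ do not vanish on $R_1$ — this is the crucial place where that constraint enters. One could instead give a purely combinatorial proof by iterated telescoping in each ``coordinate direction'' of the factorization $\ZZ_M\cong\prod_i\ZZ_{p_i^{n_i}}$, but the short Fourier computation above seems the most transparent route.
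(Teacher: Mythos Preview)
Your argument is correct. The Fourier-inversion identity
\[
F[\Delta]=\frac{1}{M}\sum_{\xi\in\ZZ_M}\hat f(\xi)\,\eta^{-c\xi}\prod_{j=1}^{K}\bigl(1-\eta^{-d_j\xi}\bigr)
\]
is derived correctly, the analysis of when each factor $1-\eta^{-d_j\xi}$ vanishes is correct (and you are right that the constraint $1\le\rho_j\le p_j-1$ is exactly what is used here), and the (ii)$\Rightarrow$(i) direction via injectivity of the Fourier transform applied to $c\mapsto F[\Delta_c]$ is clean and complete.

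As for comparison with the paper: there is nothing to compare against, because the paper does not prove Proposition~\ref{cuboid}. It is stated as a known result and attributed to Steinberger \cite{Steinberger} and to \cite{KMSV, LaLo1}. Your Fourier-side proof is a perfectly good self-contained verification; the combinatorial/telescoping alternative you allude to at the end is closer in spirit to how the result is often presented in those references, but the Fourier computation is indeed the shortest route and has the advantage of making transparent \emph{why} the cuboid evaluations detect exactly the frequencies in $R_1$.
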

	
	Given a function $f:\ZZ_M\rightarrow\RR$, cyclotomic divisibility of $f$ by $\Phi_N$ with $N|M$ is determined by Proposition \ref{cuboid} with $M$ and $f$ replaced by $N$ and $f_N$ respectively, where $f_N$ is the function defined on $\ZZ_N$
	\begin{equation}\label{fmodN}
		f_N(z)=\sum_{N|z-y} f(y).
	\end{equation}

	We will show that when $f\in \cala(M)$ is a step  function, Proposition \ref{cuboid} considerably simplifies. We first need an auxiliary result.
	
	\begin{lemma}\label{adjecantver}
		Let $\Delta$ be an $M$-cuboid and $f\in \cala(M)$. Suppose that there exists $i\in\{1,\ldots, K\}$ and $\beta\leq n_i-1$ such that for all vertices $x_\epsilon$  of $\Delta$ we have
		\begin{equation}\label{ver_dist_all}
			(x_\epsilon,p_i^{n_i})=p_i^{\beta}.
		\end{equation}
		Then for any pair of vertices $x_\epsilon, x_{\epsilon'}$ of $\Delta$ with
		\begin{equation}\label{ver_dist_pair}
			(x_\epsilon- x_{\epsilon'},M)=M/p_i,
		\end{equation}
		we have
		\begin{equation}\label{sameclass}
			f(x_\epsilon)=f(x_{\epsilon'}).
		\end{equation}
		Moreover, if\eqref{ver_dist_all} holds with $\beta\leq n_i-2$ for one single vertex of $\Delta$, then it holds for all the vertices with the same exponent $\beta$, and hence \eqref{sameclass} holds for any pair of vertices of $\Delta$ satisfying \eqref{ver_dist_pair}.
	\end{lemma}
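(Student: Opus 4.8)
The plan is to analyze the cuboid $\Delta$ one prime at a time, and the key observation is that modulo $p_i^{n_i}$ the cuboid degenerates to an "edge" in the $i$-th direction. Writing $d_j=\rho_j M/p_j$ as in \eqref{cub}, note that $v_{p_j}(d_j)=n_j-1$ for every $j$ (since $\rho_j$ is coprime to $p_j$), while $v_{p_j}(d_k)=n_j$ whenever $k\ne j$ (here $v_p$ denotes the $p$-adic valuation). In particular $p_i^{n_i}\mid d_j$ for all $j\ne i$, so
\[
x_\epsilon\equiv c+\epsilon_i d_i \pmod{p_i^{n_i}},
\]
and therefore the residue of $x_\epsilon$ modulo $p_i^{n_i}$, hence also the quantity $(x_\epsilon,p_i^{n_i})$, depends only on the single coordinate $\epsilon_i$. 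The hypothesis \eqref{ver_dist_all} thus says precisely that the two residues $c$ and $c+d_i$ both have $p_i$-adic valuation exactly $\beta$ modulo $p_i^{n_i}$ (recall $\beta\le n_i-1$).

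Next I would pin down which pairs of vertices satisfy \eqref{ver_dist_pair}. Writing $x_\epsilon-x_{\epsilon'}=\sum_j(\epsilon_j-\epsilon_j')d_j$ and reducing modulo $p_j^{n_j}$ for each $j\ne i$, the condition $p_j^{n_j}\mid x_\epsilon-x_{\epsilon'}$ forces $(\epsilon_j-\epsilon_j')d_j\equiv 0 \pmod{p_j^{n_j}}$; since $v_{p_j}(d_j)=n_j-1<n_j$ and $\epsilon_j-\epsilon_j'\in\{-1,0,1\}$, this forces $\epsilon_j=\epsilon_j'$. Hence $x_\epsilon-x_{\epsilon'}=(\epsilon_i-\epsilon_i')d_i$, which is nonzero exactly when $\epsilon_i\ne\epsilon_i'$, and in that case $(\pm d_i,M)=M/p_i$. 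So \eqref{ver_dist_pair} holds precisely for pairs of "adjacent" vertices differing only in the $i$-th coordinate.

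With these two facts the first assertion is immediate. Take adjacent vertices $x_\epsilon,x_{\epsilon'}$ as above. For every prime $p_\ell$ with $\ell\ne i$ we have $p_\ell^{n_\ell}\mid d_i$, so $x_\epsilon\equiv x_{\epsilon'}\pmod{p_\ell^{n_\ell}}$, and consequently $\min(v_{p_\ell}(x_\epsilon),n_\ell)=\min(v_{p_\ell}(x_{\epsilon'}),n_\ell)$; i.e. the $p_\ell$-component of $(x_\epsilon,M)$ agrees with that of $(x_{\epsilon'},M)$. For the prime $p_i$, hypothesis \eqref{ver_dist_all} gives $(x_\epsilon,p_i^{n_i})=(x_{\epsilon'},p_i^{n_i})=p_i^\beta$, so the $p_i$-components agree as well. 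Therefore $(x_\epsilon,M)=(x_{\epsilon'},M)$, i.e. $x_\epsilon$ and $x_{\epsilon'}$ lie in the same step class, and since $f\in\cala(M)$ we get $f(x_\epsilon)=f(x_{\epsilon'})$, which is \eqref{sameclass}.

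For the "moreover" part I would invoke the non-cancellation of valuations. By the first paragraph the only residues modulo $p_i^{n_i}$ occurring among the vertices are $c$ and $c+d_i$, with $v_{p_i}(d_i)=n_i-1$. If \eqref{ver_dist_all} holds with exponent $\beta\le n_i-2$ at one vertex, then one of these two residues has valuation $\beta$; since $\beta<n_i-1=v_{p_i}(d_i)$, adding or subtracting $d_i$ leaves the valuation unchanged, so the other residue also has valuation $\beta$. Hence \eqref{ver_dist_all} holds at every vertex with the same $\beta$, and the first part applies to all pairs satisfying \eqref{ver_dist_pair}. I do not expect a genuine obstacle here; the only delicate point is this last valuation bookkeeping, where the hypothesis $\beta\le n_i-2$ (strictly below $v_{p_i}(d_i)$) is exactly what rules out cancellation, and everything else reduces to routine gcd computations.
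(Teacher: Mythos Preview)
Your proof is correct and follows essentially the same approach as the paper: both arguments show that $x_\epsilon$ and $x_{\epsilon'}$ lie in the same step class by checking the $p_\ell$-component of $(x_\epsilon,M)$ prime by prime, using that the difference is divisible by $p_\ell^{n_\ell}$ for $\ell\ne i$ and invoking the hypothesis \eqref{ver_dist_all} for $\ell=i$; the ``moreover'' part is handled the same way in both, via the observation that vertex differences are divisible by $p_i^{n_i-1}$. Your extra step of explicitly characterizing the pairs satisfying \eqref{ver_dist_pair} as those differing only in the $i$-th coordinate is correct but unnecessary---the paper just uses \eqref{ver_dist_pair} directly to write $x_\epsilon-x_{\epsilon'}$ as a unit multiple of $M/p_i$---but it does no harm.
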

	\begin{proof}
		Let $m|M$, and suppose that  $x_\epsilon\in R_m$. As $f$ is a step function, it is enough to show that $x_{\epsilon'}\in R_m$.

        The exponent of $p_i$ in $m$ is $\beta\le n_i-1$ by \eqref{ver_dist_all}. If $\beta\le n_i-2$, then by \eqref{ver_dist_pair} we have $x_{\epsilon'}\equiv x_\epsilon -cp_1^{n_1}\dots p_i^{n_i-1}\dots p_K^{n_K}$ (mod $M$), with some $1\le c\le p_i-1$. Hence we obtain $x_{\epsilon'}\in R_m$, because the exponents of all the primes in $x_{\epsilon'}$ remain the same as in $x_\epsilon$.
		Assume now that $\beta={n_i-1}$. In $x_{\epsilon'}$ the exponents of primes different from $p_i$ still remain the same as in $x_\epsilon$, and that of $p_i$ either remains $n_i-1$ or becomes $n_i$. However, the latter is excluded by assumption \eqref{ver_dist_all}. Again, we conclude $x_{\epsilon'}\in R_m$.
		
		\medskip
		
		For the last sentence of the proposition,  notice that the difference between any two vertices of an $M$-cuboid is divisible by $p_1^{n_1-1}\dots  p_K^{n_K-1}$. Therefore, if the exponent of $p_i$ in $x_\epsilon$ is $\beta\le n_i-2$, then it is the same for all the other the vertices.
		
	\end{proof}
	
	\begin{corollary}\label{mnull}
		Let $\Delta$ be an $M$-cuboid and $f\in \cala(M)$. Suppose that there exists $i\in\{1,\ldots, K\}$, and $\beta\le n_i-1$ such that \eqref{ver_dist_all} holds for all vertices $x_\epsilon$  of $\Delta$. Then $F[\Delta]=0$.
	\end{corollary}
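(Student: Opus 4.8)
The plan is to exhibit a sign‑reversing pairing on the vertex set of $\Delta$ under which $f$ is invariant, so that the signed sum \eqref{cubeval} defining $F[\Delta]$ cancels in pairs. Writing a generic vertex as $x_\epsilon = c + \sum_{j=1}^K \epsilon_j d_j$ with $\epsilon\in\{0,1\}^K$ and $d_i=\rho_i M/p_i$, the natural pairing flips the $i$-th coordinate: match $\epsilon$ with the vector $\epsilon'$ that agrees with $\epsilon$ in every coordinate except the $i$-th. Then $x_{\epsilon'}-x_\epsilon=\pm d_i$, and the signs attached to these two vertices in \eqref{cubeval} are opposite, since $\sum_{j}\epsilon'_j$ and $\sum_{j}\epsilon_j$ differ by one. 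It therefore suffices to show $f(x_\epsilon)=f(x_{\epsilon'})$ for every such pair, after which $F[\Delta]$ is a sum of $2^{K-1}$ vanishing terms.

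To establish $f(x_\epsilon)=f(x_{\epsilon'})$, I would first compute $(x_\epsilon-x_{\epsilon'},M)=(d_i,M)=(\rho_i M/p_i, M)$. Since $M=p_i\cdot(M/p_i)$ and $1\le \rho_i\le p_i-1$ forces $\gcd(\rho_i,p_i)=1$, we get $\gcd(\rho_i M/p_i,\,M)=(M/p_i)\gcd(\rho_i,p_i)=M/p_i$; thus the pair satisfies \eqref{ver_dist_pair}. By hypothesis, \eqref{ver_dist_all} holds at every vertex of $\Delta$ with one and the same exponent $\beta\le n_i-1$, which is exactly the hypothesis of Lemma \ref{adjecantver}; that lemma then gives $f(x_\epsilon)=f(x_{\epsilon'})$. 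Hence each coordinate-$i$ pair contributes $(-1)^{\sum_j\epsilon_j}\bigl(f(x_\epsilon)-f(x_{\epsilon'})\bigr)=0$ to \eqref{cubeval}, and $F[\Delta]=0$.

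I do not expect any genuine obstacle: the corollary is essentially a repackaging of Lemma \ref{adjecantver}. The only point worth a moment's care is the verification that the lemma applies verbatim, namely that \eqref{ver_dist_all} is assumed at all vertices with a common $\beta$ — which is precisely the standing assumption. (If instead one only knew \eqref{ver_dist_all} at a single vertex with $\beta\le n_i-2$, one would first use the last sentence of Lemma \ref{adjecantver} to propagate it to all vertices; this refinement is not needed here.) The remainder is routine sign bookkeeping over the $2^K$ vertices of the cuboid.
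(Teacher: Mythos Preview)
Your proof is correct and follows essentially the same approach as the paper: the paper partitions $\Delta$ into two facets $\Delta_i\cup\Delta_i'$ with respect to $p_i$ (which are precisely your sets $\{\epsilon_i=0\}$ and $\{\epsilon_i=1\}$), pairs each $v\in\Delta_i$ with the unique $v'\in\Delta_i'$ satisfying $(v-v',M)=M/p_i$, and invokes Lemma~\ref{adjecantver} exactly as you do. Your explicit verification that $(d_i,M)=M/p_i$ is a detail the paper leaves implicit, but otherwise the arguments coincide.
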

	\begin{proof}
		Write $\Delta$ as a disjoint union of its two facets $\Delta_i\cup\Delta'_i$, with respect to $p_i$: explicitly, each facet is defined as the maximal subset of vertices such that the pairwise difference between any two elements is divisible by $p_i^{n_i}$. The vertices of the facets $\Delta_i$ and $\Delta'_i$ appear with opposite signs in the evaluation \eqref{cubeval}. Also, for every element $v$ of $\Delta_i$ there exists a unique element $v'$ of $\Delta'_i$ such that $(v-v',M)=M/p_i$. By Lemma \ref{adjecantver} $f(v)=f(v')$ which implies $F[\Delta]=0$.
	\end{proof}
	
	We are now ready to prove the strengthening of Proposition \ref{cuboid} for step functions.
	\begin{proposition}\label{avgcyc}
		Let $f\in \cala(M)$. The following are equivalent:
		\begin{enumerate}
			\item[(I)] $\Phi_M(X)|F(X)$,
			\item[(II)] $F[\Delta]=0$ for all $M$-cuboids $\Delta$ with a vertex at $0$,
			\item[(III)] $F[\Delta]=0$ for one $M$-cuboid $\Delta$ with a vertex at $0$.
		\end{enumerate}
	\end{proposition}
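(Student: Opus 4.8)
The plan is to reduce everything to Proposition~\ref{cuboid} by showing that for a step function $f\in\cala(M)$ the evaluation $F[\Delta]$ is, up to sign, the \emph{same} nonzero quantity for every $M$-cuboid that does not vanish for a trivial reason. The implications $(I)\Rightarrow(II)\Rightarrow(III)$ are immediate: by Proposition~\ref{cuboid}, $(I)$ forces $F[\Delta]=0$ for \emph{all} $M$-cuboids, in particular for those with a vertex at $0$, which is $(II)$; and such cuboids exist (e.g. $\Delta(X)=\prod_{i=1}^{K}(1-X^{M/p_i})$, which has $0$ among its vertices), so $(II)\Rightarrow(III)$ is trivial. The entire content is thus $(III)\Rightarrow(I)$, again via Proposition~\ref{cuboid}: one single vanishing $F[\Delta]=0$ for a cuboid through the origin must propagate to \emph{all} $M$-cuboids.

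Write $f$ as the step function with constant value $c_m$ on $R_m$. The first step is to locate the vertices of a cuboid among the step classes. Take a cuboid $\Delta(X)=X^c\prod_i(1-X^{d_i})$, $d_i=\rho_iM/p_i$, with a vertex at the origin; replacing some $d_i$ by $-d_i$ (which only changes $\rho_i$ to $p_i-\rho_i$ and multiplies $\Delta(X)$ by a monomial, hence leaves the vertex set untouched) we may assume $c=0$, so the vertex indexed by $S\subseteq\{1,\dots,K\}$ is $\sum_{i\in S}\rho_iM/p_i$. Since $p_i\nmid\rho_i$, this vertex satisfies $(x,p_i^{n_i})=p_i^{n_i-1}$ for $i\in S$ and $(x,p_i^{n_i})=p_i^{n_i}$ for $i\notin S$, hence it lies in $R_{M/\prod_{i\in S}p_i}$ — a class not depending on the $\rho_i$. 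Therefore
\[
F[\Delta]=\sum_{S\subseteq\{1,\dots,K\}}(-1)^{|S|}\,c_{M/\prod_{i\in S}p_i}=:T(f)
\]
for \emph{every} $M$-cuboid $\Delta$ with a vertex at the origin; in particular $(II)$ and $(III)$ are each equivalent to $T(f)=0$.

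The second step treats an arbitrary $M$-cuboid $\Delta(X)=X^c\prod_i(1-X^{d_i})$ one prime at a time. Splitting $\Delta$ along $p_i$ into its two facets, and using that the $d_j$ with $j\ne i$ are divisible by $p_i^{n_i}$, the value of $(x_\epsilon,p_i^{n_i})$ is constant on each facet. Let $\beta_i$ be the exponent of $p_i$ in $c$. If $\beta_i\le n_i-2$, or if $\beta_i=n_i-1$ and also $(c+d_i,p_i^{n_i})=p_i^{n_i-1}$, then $(x_\epsilon,p_i^{n_i})$ equals one and the same $p_i^{\beta}$ with $\beta\le n_i-1$ on all vertices, and Corollary~\ref{mnull} gives $F[\Delta]=0$. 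In the only remaining configurations — $\beta_i=n_i-1$ with $p_i^{n_i}\mid c+d_i$, or $p_i^{n_i}\mid c$ — one $p_i$-facet has $(x_\epsilon,p_i^{n_i})=p_i^{n_i-1}$ and the other $p_i^{n_i}$; let $\sigma_i\in\{0,1\}$ record which of the two facets (the $\epsilon_i=0$ one or the $\epsilon_i=1$ one) lands at $p_i^{n_i-1}$, and put $\calb=\{i:\sigma_i=1\}$. A direct check then shows that the vertex of $\Delta$ with support $S$ lies in $R_{M/\prod_{i\in S\triangle\calb}p_i}$. Re-indexing the sum \eqref{cubeval} through the bijection $S\mapsto S\triangle\calb$ and using $(-1)^{|S|}=(-1)^{|\calb|}(-1)^{|S\triangle\calb|}$ then gives $F[\Delta]=(-1)^{|\calb|}T(f)$. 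Hence every $M$-cuboid satisfies $F[\Delta]\in\{0,\pm T(f)\}$.

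With this in hand the proof finishes at once: $(III)$ gives $T(f)=0$, hence $F[\Delta]=0$ for all $M$-cuboids, and Proposition~\ref{cuboid} gives $(I)$. I expect the main obstacle to be the valuation bookkeeping in the second step — especially the borderline case $\beta_i=n_i-1$, where whether the $\epsilon_i=1$ facet sits at $p_i^{n_i-1}$ or at $p_i^{n_i}$ is decided by a genuine cancellation in $c+d_i$ modulo $M$ — together with verifying carefully that in every non-degenerate configuration the step-class pattern of the vertices is precisely the "$\triangle\calb$-shift" of the pattern for the base cuboid, so that the reindexing is legitimate.
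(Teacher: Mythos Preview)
Your argument is correct and follows essentially the same route as the paper: the paper proves (III)$\Rightarrow$(II) by observing that for any two cuboids with $c=0$ the vertices indexed by the same $\epsilon$ lie in the same step class (this is precisely your explicit formula $F[\Delta]=T(f)$), and proves (II)$\Rightarrow$(I) by showing that any cuboid not killed by Corollary~\ref{mnull} must have a vertex at $0$ (your $\sigma_i$-bookkeeping does the same thing, in effect locating that vertex and then re-indexing). One small slip in your second step: with your definitions of $\sigma_i$ and $\mathcal{B}$, the vertex with support $S$ actually lies in $R_{M/\prod_{i\notin S\triangle\mathcal{B}}p_i}$, not $R_{M/\prod_{i\in S\triangle\mathcal{B}}p_i}$ (check the base case $c=0$, where every $\sigma_i=1$ so $\mathcal{B}=\{1,\dots,K\}$, yet vertex $S$ sits in $R_{M/\prod_{i\in S}p_i}$); this only flips a global sign in $F[\Delta]=\pm T(f)$ and does not affect the conclusion.
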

	\begin{proof}
		By Proposition \ref{cuboid} (I) implies (II). Clearly (II) implies (III).
		
		We prove (II) implies (I). By Proposition \ref{cuboid} we need to prove $F[\Delta]=0$ for all $M$-cuboids $\Delta$. By Corollary \ref{mnull} it is enough to consider $M$-cuboids which violate \eqref{ver_dist_all} for all $i$. Fix an index $i$. By the last part of Lemma \ref{adjecantver}, the condition \eqref{ver_dist_all} can only be violated if at some vertices of the cuboid the exponent of $p_i$ is equal to $n_i-1$, while at other vertices it is equal to $n_i$.

		That is, for every $i\in \{1,\ldots, K\}$ there exists a vertex $x_i$ of $\Delta$ satisfying
		\begin{equation}\label{planerest}
			p_i^{n_i}|x_i.
		\end{equation}
		Now, for every index $i$, partition $\Delta$ into two facets, with respect to $p_i$, $\Delta=\Delta_i\cup \Delta'_i$ such that for every $i=1,\ldots, K$ $x_i\in \Delta_i$. By definition $p_i^{n_i}|x-x_i$ for all vertices $x$ of $\Delta_i$, and hence  by \eqref{planerest} we have
		$$
		p_i^{n_i}|x.
		$$
		Let $v=\cap_{i=1}^K \Delta_i$ (the intersection is non-empty, as $K$ facets of different orientation of $\Delta$ always intersect at exactly one point -- just like $K$ axis-parallel hyperplanes intersect in one point in $\RR^K$). Clearly, $p_i^{n_i}|v$ for all $i$, and this  is equivalent to $v=0$.	By (II) we have $F[\Delta]=0$.
			
		In order to prove (III) implies (II) we take two $M$-cuboids $\Delta, \Delta'$ so that $0$ is a joint vertex of both. Let $v=\sum_{j=1}^K\epsilon_jd_j$ be a vertex of $\Delta$ and $v'=\sum_{j=1}^K\epsilon_jd_j'$ the corresponding vertex of $\Delta'$, with the same coefficients $\epsilon_j$. As such, the vertices $v$ and $v'$ have the same sign in the evaluation \eqref{cubeval} corresponding to $\Delta$ and $\Delta'$, respectively. Also, it is clear that $(v,M)=(v',M)$, and hence $v$ and $v'$ fall into the same step class, implying $f(v)=f(v')$.  Therefore, the terms in the evaluation \eqref{cubeval} for $\Delta$ are exactly the same as those for $\Delta'$, and hence  we have $F[\Delta]=0$ if and only if $F[\Delta']=0$.
		\end{proof}
		
		In practical terms, when $M=p^nq^m$, $p$ and $q$ are primes, and $m,n\in \NN$, Proposition \ref{avgcyc} implies that for any $f\in \cala(M)$ and any $N|M$ divisible by $pq$
		
		\begin{equation}\label{fn}
			\Phi_N(X)|F(X) \text{ if and only if } f_N(0)-f_N(N/p)-f_N(N/q)+f_N(N/pq)=0,
		\end{equation}
		where $f_N$ is defined in \eqref{fmodN}. When $N$ is coprime to $p$ or $q$, i.e. a prime power, the relation is as follows. Let $N=p^\alpha, \alpha\geq 1$, then
		
		\begin{equation}\label{pri}
			\Phi_N(X)|F(X) \text{ if and only if } f_N(0)=f_N(p^{\alpha-1}).
		\end{equation}

		\section{Functional pd-tilings violating the (T2) condition}\label{sec4}
		
		In this final section we first present a minor positive result concerning Problem \ref{prob1}, and then provide a family of functional pd-tilings which violate the (T2) condition of Coven-Meyerowitz for $M=p^4q^2$.
		
		\medskip
		
		\begin{proposition}\label{primep}
			Let $M$ be a prime-power, $M=p^\alpha$. In this case, any set $A$ which pd-tiles $\ZZ_M$, also tiles $\ZZ_M$ properly.
		\end{proposition}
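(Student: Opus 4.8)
The plan is to reduce the statement to the Coven--Meyerowitz condition (T1). Write $M=p^\alpha$ and recall $S_A=\{p^k:1\le k\le\alpha,\ \Phi_{p^k}(X)\mid A(X)\}$. Since (T2) is vacuous when $M$ has a single prime factor, Theorem~\ref{CMthm} reduces the problem to proving $|A|=\prod_{s\in S_A}\Phi_s(1)=p^{|S_A|}$; in fact, once this is known one can exhibit the complement directly, taking $B=\bigoplus_{1\le k\le\alpha,\ p^k\notin S_A}\{cp^{k-1}:0\le c\le p-1\}$, a genuine direct sum with $|A||B|=p^\alpha$ and mask polynomial $B(X)=\prod_{p^k\notin S_A}\Phi_{p^k}(X)$, so that for every $p^k\mid M$ either $\Phi_{p^k}(X)\mid A(X)$ (by definition of $S_A$) or $\Phi_{p^k}(X)\mid B(X)$; thus \eqref{cycfact} holds and $A\oplus B=\ZZ_{p^\alpha}$.

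One inequality is immediate: the cyclotomic polynomials $\Phi_{p^k}(X)$, $p^k\in S_A$, are pairwise coprime and each divides $A(X)$, hence so does their product, and evaluating at $X=1$ gives $p^{|S_A|}\mid|A|$. For the reverse inequality $|A|\le p^{|S_A|}$ I would invoke a pd-tiling complement $f$ of $A$ together with the linear-programming machinery of Section~\ref{sec2}. Put $H=\supp\hat{\mathbf{1}}_A$, a union of step classes containing $0$, with standard complement $H'$. On each step class contained in $H$ the function $\hat{\mathbf{1}}_A$ has no zeros, so \eqref{pdt} forces $\hat f$ to vanish on $H\setminus\{0\}$. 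Averaging $f$ over dilations by units as in \eqref{avg} yields $\bar f\in\cala(M)$ with $\bar f\ge0$, $\bar f(0)=1$, $\widehat{\bar f}\ge0$, $\widehat{\bar f}(0)=|F|$, and $\supp\widehat{\bar f}\subseteq H'$ (averaging preserves vanishing on a step class). Then $w:=|F|^{-1}\widehat{\bar f}$ is admissible for $D^+(H')$: it is a nonnegative step function with $w(0)=1$, it vanishes off $H'$, and $\hat w(x)=M|F|^{-1}\bar f(-x)\ge0$. Since $\sum_z w(z)=\hat w(0)=M/|F|=|A|$ (using $|A||F|=M$), we obtain $|A|\le D^+(H')$. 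Finally, $p^k\in S_A$ exactly when $\hat{\mathbf{1}}_A$ vanishes on $R_{p^{\alpha-k}}$, i.e. exactly when the step class $R_{p^{\alpha-k}}$ lies in $H'$, so $H'$ contains precisely $|S_A|$ step classes other than $\{0\}$.

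It therefore remains to prove the following fact about $\ZZ_{p^\alpha}$: if $G$ is a union of step classes containing $0$ and $k(G)$ is the number of step classes $R_{p^j}$ ($0\le j\le\alpha-1$) contained in $G$, then $D^+(G)=p^{k(G)}$. For the lower bound, write $G=\{0\}\cup\bigcup_{j\in J}R_{p^j}$ and set $B_J=\bigoplus_{j\in J}\{tp^j:0\le t\le p-1\}$; this is a direct sum of size $p^{|J|}=p^{k(G)}$ because the $p$-adic valuation of a nonzero element $\sum_{j\in J}(t_j-t_j')p^j$ equals $\min\{j\in J:t_j\ne t_j'\}$, and the same computation shows every pairwise difference of $B_J$ lies in $G$, so $B_J$ is a clique in $\Gamma_G$ and $D^+(G)\ge\omega(G)\ge p^{k(G)}$ by Proposition~\ref{dmon}. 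The matching upper bound follows from this lower bound applied to $G'$: since $k(G)+k(G')=\alpha$, Propositions~\ref{dual} and~\ref{dmon} give $D^+(G)=M/D^-(G')\le M/D^+(G')\le p^\alpha/p^{k(G')}=p^{k(G)}$. Combining $|A|\le D^+(H')=p^{|S_A|}$ with $p^{|S_A|}\mid|A|$ yields $|A|=p^{|S_A|}$, which is exactly condition (T1), and the proof concludes as in the first paragraph.

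The crux, and the only place where the prime-power hypothesis enters essentially, is the identity $D^+(G)=p^{k(G)}$ for unions of step classes in $\ZZ_{p^\alpha}$: it pins down the clique numbers of the relevant Cayley graphs and thereby forces the cardinality of $A$. For a general modulus the analogous statement is precisely the kind of delicate linear-programming question whose failure the rest of the paper is concerned with, which is why the positive answer here is confined to prime powers.
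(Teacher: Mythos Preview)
Your proof is correct and follows essentially the same strategy as the paper: reduce to (T1), use (an averaged version of) $\hat f/\hat f(0)$ as a witness to show $|A|\le D^+(H')$, and compute $D^+(H')=p^{|S_A|}$ via the ``standard'' clique/tiling in $\ZZ_{p^\alpha}$ together with the duality $D^+(G)D^-(G')=M$. The only cosmetic differences are that the paper packages the computation of $D^+(H')$ into Proposition~\ref{hat1} and obtains the matching lower bound $|A|\ge D^+(H')$ from the dual witness $|\hat{\mathbf 1}_A|^2/|A|^2$ rather than from your elementary divisibility observation $p^{|S_A|}\mid |A|$.
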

		
		\begin{proof}
			
		As the Coven-Meyerowitz conjecture is true for $M=p^\alpha$, it is enough to prove conditions (T1) and (T2) for $A$, and as (T2) is vacuous in this case, it is sufficient to  prove (T1) only.
			
		\medskip
			
		Let $\mathbf{1}_A\ast f=\mathbf{1}_{\ZZ_M}$ be a pd-tiling. By \eqref{pdt} the support of $\hat{\mathbf{1}}_A$ and $\hat{f}$ intersect only at zero. Let $H=\{0\} \cup_{k =1}^r R_{p^{j_k}}$ denote the support of  $\hat{\mathbf{1}}_A$ (the support is necessarily a union of step classes, as noted in the Introduction). As  $r$ denotes the number of step classes in the support of $\hat{1}_A$, there are $\alpha-r$ cyclotomic divisors of $A(X)$, and hence condition (T1) means that $|A|=p^{\alpha-r}$.
			
		\medskip
		
		Consider the "standard" tiling $C\oplus D=\ZZ_M$, corresponding to $H$ and its complement $H'$, where the mask polynomial of $C$ is $\prod_{k=1}^r \Phi_{M/p^{j_k}}(X)$, and the mask polynomial of $D$ is the product of the remaining cyclotomic polynomials $\Phi_{M/p^s}$ ($s\ne j_k$). In this tiling $|C|=p^r$,  $|D|=p^{\alpha-r}$, and the support of $\hat{\mathbf{1}}_C$ is $H'$, and the support of $\hat{\mathbf{1}}_D$ is $H$. Hence, Proposition \ref{hat1} shows that $D^+(H)=p^r$, $D^+(H')=p^{\alpha-r}$.
			
		\medskip
			
		On the other hand, we claim that $D^+(H')=|A|$. Informally, this is true because the functions $\frac{1}{|A|^2}|\hat{\mathbf{1}}_A|^2$ and $\frac{1}{\hat{f}(0)}\hat{f}$ are both extremal in the classes defining $D^-(H)$ and $D^+(H')$, respectively. Formally, the functions clearly belong to these classes, and $\frac{1}{|A|^2}\sum_{\xi\in \ZZ_M} |\hat{\mathbf{1}}_A|^2(\xi)=\frac{M}{|A|}$, $\frac{1}{\hat{f}(0)}\sum_{\xi\in\ZZ_M}\hat{f}(\xi)=\frac{M}{|F|}=|A|$, and hence (invoking Porposition \ref{dual} in the middle equality)
		$$
			|A|=\frac{M}{\frac{1}{|A|^2}\sum_{\xi\in \ZZ_M} |\hat{\mathbf{1}}_A|^2(\xi)}\ge \frac{M}{D^-(H)}=D^+(H')\ge \frac{\sum_\xi \hat{f}(\xi)}{\hat{f}(0)}=|A|,
		$$
		and equality must hold everywhere.
			
			\medskip
		
		In summary, we see that $p^{\alpha-r}=D^+(H')=|A|$ which proves exactly (T1).
		\end{proof}
		
		In the terminology of \cite{KMMS} this means that the group $\ZZ_M$ is {\it pd-flat} for $M=p^\alpha$.
		We do not know whether the analogous statement holds true for $M=p^\alpha q^\beta$. If so, it would imply the "spectral $\to $ tile" direction of Fuglede's conjecture in those groups, which would be a highly nontrivial result (for the sharpest currently known results in those groups see \cite{M}).
		
		\medskip
		
		We now present the proof of Theorem \ref{main}, by describing examples of functional pd-tilings that violate the Coven-Meyerowitz (T2) condition for $M=p^4q^2$ under the additional assumption
		\begin{equation}\label{order}
		p<q<p^2.
		\end{equation}

		\begin{proof}[Proof of Theorem \ref{main}]
			
		The functions $f$ and $g$ we present below both belong to the class $\cala(M)$, their supports intersect only at the origin, and they are both eigenfunctions of the Fourier transform operator in $\ZZ_M$ with eigenvalue $\sqrt{M}=p^2q$. This is sufficient in order to prove that they tile $\ZZ_M$ since \eqref{fweaktile} is equivalent to
		$$
		\hat{f}\cdot\hat{g}=M \delta_0,
		$$
		which is clearly satisfied in this case.

		Let
		$$
		f=\sum_{m\in \Div(f)} c_m \mathbf{1}_{R_m},\ \ \ \, g=\sum_{m\in \Div(g)} \nu_m \mathbf{1}_{R_m},
		$$
		where we slightly abuse notation, setting $\Div(f)=\{m|M\,:\, f(m)>0\}$ and similarly for $g$. We display the functions in 2-dimensional arrays in which the $p$-axis is indexed by $M/p^\alpha$ ($0\le \alpha\le 4$), and the $q$-axis is indexed by $M/q^\beta$ ($0\le\beta\le 2$). The array entry $(M/p^\alpha, M/q^\beta)$ will correspond to the value of the function on the step class $R_{M/p^\alpha q^\beta}$, i.e. either $c_{M/p^\alpha q^\beta}$ or $\nu_{M/p^\alpha q^\beta}$.
			
		Consider the function $f$ defined as follows
		\begin{center}
		\begin{tabular}{ c|| c | c | c }
			$f$&$M$ & $M/q$ & $M/q^2$ \\ [0.5ex]
			\hline\hline
		
			$M$ & $1$ & 0 & $(q-p)/\phi(q^2)$\\[0.5ex]
			\hline
					
			$M/p$ & 0 & $1/\phi(q)$ & $(q-p)/\phi(q^2)$\\[0.5ex]
			\hline
					
			$M/p^2$ &$(q^2-pq+p^2-q)/p\phi(q^2)$ &$(p^2-q)/p\phi(q^2)$ &0 \\[0.5ex]
			\hline
		
			$M/p^3$ & $(q-p)/p\phi(q)$ & 0 & 0\\[0.5ex]
			\hline
		
			$M/p^4$ & 0 & 0 & $1/p^2\phi(q)$ \\[0.5ex]
			\hline
		\end{tabular}
		\end{center}
			
		Set $d=2pq-p^2-q$. The function $g$ is as follows
			
		\begin{center}
		\begin{tabular}{ c|| c | c | c }
			$g$ &$M$ & $M/q$ & $M/q^2$ \\ [0.5ex]
			\hline\hline
			
			$M$ & $1$ & $p(q-p)/d$ & 0\\[0.5ex]
			\hline
		
			$M/p$ &  $q\phi(p)/d$  & 0 & 0\\[0.5ex]
			\hline
			
			$M/p^2$ & 0 & 0 & $1/d$ \\[0.5ex]
			\hline
			
			$M/p^3$ & 0 & $q/pd$ & $\phi(p)/pd$\\[0.5ex]
			\hline
			
			$M/p^4$ & $(q-p)/pd$ & $(q-p)/pd$ & 0 \\[0.5ex]
			\hline
		\end{tabular}
		\end{center}
		
		By the restriction \eqref{order} it is easy to verify  that both $f$ and $g$ are nonnegative.
			
		\medskip
			
		To calculate the Fourier transforms of $f$ and $g$ one can use the  explicit formula given in Lemma \ref{fl} (because $f$ and $g$ are linear combinations of indicator functions of step classes). Using that formula one can directly verify that $f$ and $g$ are indeed eigenfunctions of the Fourier transform with eigenvalue $\sqrt{M}=p^2q$, and consequently all conditions in Definition \ref{fweakdef} are satisfied.
			
		\medskip
			
		Considering the zero sets of $f$ and $g$, while keeping in mind that $c_m=0$ if and only if $\Phi_{M/m}(X)|F(X)$ (and similarly for $\nu_m$ and $g$), we see that
		$$
		\Phi_p\Phi_{p^4}\Phi_q|F(X),\,\Phi_{p^2}\Phi_{p^3}\Phi_{q^2}|G(X).
		$$
		On the one hand, a direct calculation shows $\hat{f}(0)=\hat{g}(0)=p^2q$, hence condition (T1) is satisfied. On the other hand, we observe that $c_{M/pq}$ and $ \nu_{M/p^2q^2}$ are positive therefore, respectively, $\Phi_{pq}\nmid F(X)$ and $\Phi_{p^2q^2}\nmid G(X)$ which in turn implies that both functions violate  condition (T2).
		\end{proof}
		
		\medskip
		
		We note that cyclotomic divisibility of $f$ and $g$ can also be explicitly verified through Proposition \ref{avgcyc}, which provides a system of linear constraints on the coefficients $c_m,\nu_m$. For example, by Proposition \ref{avgcyc} the fact that $\Phi_M(X)|G(X)$ is equivalent to
		$$
		\nu_M-\nu_{M/p}-\nu_{M/q}+\nu_{M/pq}=0,
		$$
		which evidently holds true.	Similarly, the fact that
		$\Phi_{M/p}|F(X)$, is equivalent to $\Phi_{M/p}|F_{M/p}(X)$ (where the polynomial $F_{M/p}(X)$ is defined by \eqref{maskpoly} for the function $f_{M/p}$  in \eqref{fmodN}), and by \eqref{fn} this can be further restated as
		
		$$
		c_M-\phi(p)c_{M/pq}-pc_{M/p^2}+pc_{M/p^2q}=0,
		$$
		and directly verified. All cyclotomic divisibility conditions can be verified in this manner (as an alternative way, instead of checking that $f$ and $g$ are eigenfunctions of the Fourier transform).
		
%%% AUTHOR: optional appendix here
\appendix %% you may comment this out if no Appendix
\section*{Appendix}
\label{app}
		
		Finally, in order to demonstrate the strength of Proposition \ref{hat1} we include here some statistical data of a computer experiment for a specific (fairly small) value of $M$.
		
		\medskip
		
		Let $M=3^25^27^2$, and let us try to make a list of those subsets $H\subset \ZZ_M$ which satisfy the conditions of Proposition \ref{hat1} with $k_H=pqr$.
		
		\medskip
		
		There are 27 divisors $m$ of $M$, and correspondingly 27 classes $R_m$. As a first step, from each pair of classes $\{R_{M/p}, R_{M/p^2}\}$, $\{R_{M/q}, R_{M/q^2}\}$, $\{R_{M/r}, R_{M/r^2}\}$ we put one into $H$, and the other into the standard complement $H'$. We refer to these as the 'prime-power classes'. There are $2^3=8$ ways to make this selection. We put $0$ into both $H$ and $H'$ in all cases.
		As such, after distributing the prime-power classes, 7 of the classes $R_m$ are positioned into $H$ or $H'$. There are 20 remaining classes $R_{M/d}$ (corresponding to the non-prime-power divisors $d$). There are $2^{20}$ ways to select which ones we put into $H$. For each selection we run two  linear programs (LPs) to test whether $D^{\delta+}(H)=D^-(H)$, and whether $D^{\delta+}=pqr$, i.e whether $H$ satisfies the conditions of Proposition \ref{hat1}. This means running $2^3\cdot 2^{20}\cdot  2\approx 16\cdot 10^6$ LP's. Finally, if $H$ satisfies the conditions of Proposition \ref{hat1}, we can check whether it violates (T2) or not.
		
		\medskip
		
		The results are as follows. For each selection of prime-power classes in $H$, the total number of possible selections of non-prime-power classes is $2^{20}$, and we indicate the number of those which satisfy the conditions of Proposition \ref{hat1}, and which do not satisfy the (T2) condition:
		
		\medskip

		\begin{center}
		\begin{tabular}{ ||c c c c|| }
		\hline
		Prime-powers & Total cases & $H$ satisfying Prop. \ref{hat1} & $H$ violating (T2) \\ [0.5ex]
		\hline\hline
		$\{3, 5, 7\}$ & $2^{20}$ & $10796$ & $2$ \\[0.5ex]
		\hline
		$\{3, 5, 49\}$ &  $2^{20}$ & $5384$ & $5$\\[0.5ex]
		\hline
		$\{3, 25, 7\} $ & $ 2^{20}$ & $6164$ & $ 11$\\[0.5ex]
		\hline
		$\{3, 25, 49\}$ &  $2^{20}$ & $2190$ & $18$\\[0.5ex]
		\hline
		$\{9, 5, 7\}$ & $2^{20}$ & $5523$ & $50$\\[0.5ex]
		\hline
		$\{9, 5, 49\}$ &  $2^{20}$ & $2834$ & $12$\\[0.5ex]
		\hline
		$\{9, 25, 7\}$ & $2^{20}$ & $3190$ & $2$\\[0.5ex]
		\hline
		$\{9, 25, 49\}$ & $2^{20}$ & $1281$ & $13$\\ [0.5ex]
		\hline
		\end{tabular}
		\end{center}
		
		\medskip

		In summary, the total number of sets $H$ satisfying the conditions of Proposition \ref{hat1} is 37362, which may seem large, but recall that we had about 8 million sets $H$ to test, so these represent less than half percent of the cases. The vast majority of these satisfy condition (T2). The total number of sets $H$ which violate (T2) is only 113 out of 37362. For each such  $H$,  we can obtain a functional pd-tiling $f\ast g=\mathbf{1}_{\ZZ_M}$ such that $f$ violates the (T2) condition. Interestingly, in all such examples we have found, $g$ also violates the (T2) condition. In some of these examples $f$ and $g$ are eigenfunctions of the Fourier transform (as in Theorem \ref{main}), while in others they are not.
		
		\medskip
		
		It remains to be determined by future research  whether these functional counterexamples can be used as building blocks of a proper counterexample to the Coven-Meyerowitz conjecture.

%\section{Improving the constants}
%Material is placed here as needed.

%%% AUTHOR: optional acknowledgments here
\section*{Acknowledgments} %%  you may comment this out if no Ackno
The authors are very grateful to the reviewers for their insightful comments and suggestions, and for spotting misprints and inaccuracies in the original draft. Also, the lower bound \eqref{trh} was suggested by one of the reviewers (with a different argument leading to it).

\medskip

        The research was partly carried out at the Erd\H os Center, R\'enyi Institute, in the framework of the semester "Fourier analysis and additive problems".

		G.K. was supported by the Hungarian National Foundation for
		Scientific Research, Grants No. K146922, FK 142993 and by the J\'anos Bolyai Research Fellowship of the Hungarian Academy of Sciences.
		
		I.L. is supported by the Israel Science Foundation (Grant 607/21).
		
		M.M. was supported by the Hungarian National Foundation for Scientific Research, Grants No. K132097, K146387.
		
		G.S. was supported by the Hungarian National Foundation for Scientific Research, Grant No. 138596 and Starting Grant 150576.

%%% AUTHOR:
%%% Bibliography goes here. Note that the arXiv cannot process bibtex
%%% or biber bibliographies.  Example of acceptable bibliograpy format:
\bibliographystyle{amsplain}

\begin{thebibliography}{99}

\bibitem{CM}
E. Coven and A. Meyerowitz.
\newblock Tiling the integers with translates of one finite set.
\newblock {\em J. Algebra}, 212: 161--174, 1999.
			
\bibitem{dutkay-kraus}
D. E. Dutkay and I. Kraus.
\newblock On spectral sets of integers.
\newblock in: {\em Frames and Harmonic Analysis}, Contemporary Mathematics, vol. 706, edited by Y. Kim, A.K. Narayan, G. Picioroaga, and  E.S. Weber, American Mathematical Society, 215--234, 2018.
			
\bibitem{KMMS}
G. Kiss, D. Matolcsi, M. Matolcsi and G. Somlai.
\newblock Tiling and weak tiling in $(\ZZ_p)^d$.
\newblock {\em Sampling Theory, Signal Processing, and Data Analysis}, 22(1), 2024.
			
\bibitem{KMSV}
G. Kiss, R. D. Malikiosis, G. Somlai and M. Vizer.
\newblock On the discrete Fuglede and Pompeiu problems.
\newblock {\em Anal. PDE}, 13 (3): 765-788, 2020.

\bibitem{kr}
M. Kolountzakis ans Sz. R\'ev\'esz.
\newblock On a problem of Tur\'an about positive definite functions.
\newblock {\em Proc. Am. Math. Soc.}, 131(11): 3423-3430, 2003.

\bibitem{KL}
S. Konyagin and I. {\L}aba.
\newblock Spectra of certain types of polynomials and tiling the integers with translates of finite sets.
\newblock {\em J. Number Theory}, 103: 267--280, 2003.
			
\bibitem{LaLo1}
I. {\L}aba and I. Londner.
\newblock Combinatorial and harmonic-analytic methods for integer tilings.
\newblock {\em Forum Math. Pi}, 10(e8), 2022.
			
\bibitem{LaLo2}
I. {\L}aba and I. Londner.
\newblock The Coven-Meyerowitz tiling conditions for 3 odd prime factors.
\newblock {\em Invent. Math.}, 232(1): 365--470, 2023.
			
\bibitem{LaLo4}
I. {\L}aba and I. Londner.
\newblock The Coven-Meyerowitz tiling conditions for 3 prime factors: the even case.
\newblock  {\em Research in the Mathematical Sciences}, 12(43), 2025.
			
\bibitem{LaLo3}
I. {\L}aba and I. Londner.
\newblock Splitting for integers tilings.
\newblock {\em IMRN}, 8: 1-–21, 2025.
			
\bibitem{LM}
N. Lev, and M. Matolcsi.
\newblock The Fuglede conjecture for convex domains is true in all dimensions.
\newblock {\em Acta Mathematica}, 228(2): 385--420, 2022.
			
\bibitem{M}
R. D. Malikiosis.
\newblock On the structure of spectral and tiling subsets of cyclic groups.
\newblock {\em Forum Math. Sigma}, 10(e23): 1--42, 2022.
			
\bibitem{MR}
M. Matolcsi and I. Z. Ruzsa.
\newblock Difference sets and positive exponential sums I., General properties.
\newblock {\em J. Fourier Anal. Appl.}, 20(1): 17--41, 2014.
				
\bibitem{New}
D.J. Newman.
\newblock Tesselation of integers.
\newblock {\em J. Number Theory}, 9: 107--111, 1977.
			
\bibitem{Sands}
A. Sands.
\newblock On Keller's conjecture for certain cyclic group.
\newblock {\em Proc. Edinburgh Math. Soc.}, 2: 17--21, 1979.
			
\bibitem{shi}
R. Shi.
\newblock Fuglede's conjecture holds on cyclic groups $\ZZ_{pqr}$.
\newblock {\em Discrete Anal.}, 14, 2019.
			
\bibitem{Steinberger}
J. P. Steinberger.
\newblock Minimal vanishing sums of roots of unity with large coefficients.
\newblock {\em Proc. Lond. Math. Soc.}, 97(3): 689--717, 2008.
			
\bibitem{Tao-blog}
T. Tao.
\newblock Some notes on the Coven-Meyerowitz conjecture (blog post and discussion in comments).
\newblock
https://terrytao.wordpress.com/2011/11/19/some-notes-on-the-coven-meyerowitz-conjecture/
			
\bibitem{via}
M. Viazovska.
\newblock The sphere packing problem in dimension 8. \newblock {\em Annals of Mathematics}, 185(3): 991--1015,
2017.

\end{thebibliography}

%% AUTHOR: You can generate such a bibliography from a .bib file by
%% running pdflatex/bibtex/pdflatex/pdflatex and then pasting the .bbl file
%% between \begin{thebibliography} and \end{bibliography}

%%% AUTHOR: Include a short description of each author following the
%%% structure below. Use the same short tags used previously.
%%% Use \imageat{} and \imagedot{} instead of "@" and "." in
%%% email addresses-this replaces the symbols with graphics to avoid
%%% e-mail address harvesting from the .pdf file
\begin{dajauthors}
\begin{authorinfo}[GK]
  Gergely Kiss\\
   Corvinus University of Budapest, Department of Mathematics \\
 Fővám tér 13-15, Budapest 1093, Hungary,\\
  and\\
  HUN-REN Alfr\'ed R\'enyi Institute of Mathematics\\
 Re\'altanoda utca 13-15, H-1053, Budapest, Hungary\\
 kiss.gergely\imageat{}renyi\imagedot{}hu \\

  %\url{https://www.cs.elte.hu/erdos}
\end{authorinfo}
\begin{authorinfo}[IL]
  Itay Londner\\  Department of Mathematics, Faculty of Mathematics and Computer Science,\\ Weizmann Institute of Science,\\ Rehovot 7610001, Israel\\ %(ORCID ID: 0000-0003-3337-9427)}\\
itay.londner\imageat{}weizmann\imagedot{}ac\imagedot{}il\\
  %\url{http://www.csc.kth.se/~johanh}
\end{authorinfo}
\begin{authorinfo}[MM]
        M\'at\'e  Matolcsi\\
		HUN-REN Alfr\'ed R\'enyi Institute of Mathematics\\
		Re\'altanoda utca 13-15, H-1053, Budapest, Hungary\\
		and\\
		Department of Analysis and Operations Research,
		Institute of Mathematics,\\
		Budapest University of Technology and Economics,\\
		M\H uegyetem rkp. 3., H-1111 Budapest, Hungary,\\
	%	 matomate@renyi.hu}
  matomate\imageat{}renyi\imagedot{}hu\\
 % \url{http://www.cs.elte.hu/~lovasz}
\end{authorinfo}
\begin{authorinfo}[GS]
 G\'abor Somlai \\
Eötvös Loránd University, Faculty of Science, Institute of Mathematics, Department of Algebra and Number Theory \\
 P\'azm\'any P\'eter s\'et\'any 1/C, Budapest, Hungary, H-1117
\\ and \\
HUN-REN Alfr\'ed R\'enyi Institute of Mathematics\\
 Re\'altanoda utca 13-15, H-1053, Budapest, Hungary,
 \\
 gabor.somlai\imageat{}ttk\imagedot{}elte\imagedot{}hu
\end{authorinfo}
\end{dajauthors}

\end{document}